\numberwithin{equation}{section}
\newcommand{\N}{\mathbb N}
\newcommand{\Q}{\mathbb Q}
\newcommand{\R}{\mathbb R}
\def\E{\mathbb E}
\def\P{\mathbb P}
\newcommand{\br}{{\bf r}}
\numberwithin{equation}{section}
\newtheorem{thm}{Theorem}[section]
\newtheorem{lem}[thm]{Lemma}
\newtheorem{cor}[thm]{Corollary}
\newtheorem{prop}[thm]{Proposition}
\theoremstyle{definition}
\def\smallnegint{\mathop{\int\mkern-13mu
        \raise.5ex\hbox{${\scriptscriptstyle\diagup}$}}\nolimits}
\def\ds{\displaystyle}
\def\ep{\varepsilon}
\def\F{{\mathcal F}}
\def\tr{\operatorname{tr}}
\def\ssetminus{\,\raise.4ex\hbox{$\scriptstyle\setminus$}\,}
\newcommand{\be}{\begin{equation}}
\newcommand{\ee}{\end{equation}}
\newcommand{\Rd}{\mathbb{R}^d}
\renewcommand{\bar}{\overline}
\renewcommand{\tilde}{\widetilde}
\renewcommand{\hat}{\widehat}
\begin{document}

\title[Correctors in homogenization of HJ equations]{On the existence of correctors for the stochastic homogenization of viscous Hamilton-Jacobi equations}
\author[Pierre Cardaliaguet and Panagiotis E. Souganidis]
{Pierre Cardaliaguet and Panagiotis E. Souganidis}
\address{Ceremade, Universit\'e Paris-Dauphine,
Place du Mar\'echal de Lattre de Tassigny, 75775 Paris cedex 16 - France}
\email{cardaliaguet@ceremade.dauphine.fr }
\address{Department of Mathematics, University of Chicago, Chicago, Illinois 60637, USA}
\email{souganidis@math.uchicago.edu}
\vskip-0.5in 
\thanks{\hskip-0.149in Cardaliaguet was partially supported by the ANR (Agence Nationale de la Recherche) project  ANR-12-BS01-0008-01. Souganidis was partially supported by the National Science
Foundation Grants DMS-1266383 and DMS-1600129 and the Office of Naval Research Grant  N000141712095.}
\dedicatory{Version: \today}


\maketitle      


\section*{abstract}

\noindent We prove, under some assumptions,  the existence of correctors for the stochastic homogenization of of ``viscous'' possibly degenerate Hamilton-Jacobi equations
in stationary ergodic media. The general claim is that, assuming knowledge of homogenization in probability, correctors exist for  all extreme points of the convex hull of the sublevel sets of the effective Hamiltonian. 
Even when homogenization is not a priori  known, the arguments imply existence of correctors and, hence, homogenization in some new settings. These include positively homogeneous Hamiltonians and, hence, geometric-type equations including motion by mean curvature, in radially symmetric environments and for all directions. Correctors also exist and, hence, homogenization holds for many directions for non convex Hamiltonians and general stationary ergodic media.  

\section*{R\'esum\'e}

Nous d\'emontrons l'existence, sous certaines conditions, de correcteurs en homog\'en\'eisation stochastique d'\'equations de Hamilton-Jacobi et d'\'equations de Hamilton-Jacobi visqueuses. L'\'enonc\'e g\'en\'eral est que, si l'on sait qu'il y a homog\'en\'eisation en probabilit\'e, un correcteur existe pour toute  direction \'etant un point extr\'emal de l'enveloppe convexe d'un ensemble de niveau du Hamiltonien effectif. 
M\^{e}me lorsque que l'homog\'en\'eisation n'est pas connue a priori, les arguments d\'evelopp\'es dans cette note montrent l'existence d'un correcteur, et donc l'homog\'enisation, dans certains contextes. Cela inclut les \'equations de type g\'eom\'etrique dans des environnements dont la loi est \`{a} symm\'etrie radiale. Dans le cas g\'en\'eral stationnaire ergodique et sans hypoth\`{e}se de convexit\'e sur le hamiltonien, on montre que des correcteurs existent pour plusieurs directions.

\section{Introduction}
The aim of this note is to show the existence of  correctors for the  stochastic homogenization of ``viscous'' Hamilton-Jacobi equations of the form 
\begin{equation}
\label{e.VHJ}
u^\ep_t -\ep \tr\left(A\left(Du^\ep,\frac x\ep,\omega \right)D^2u^\ep\right)+ H\left(Du^\ep, \frac x\ep,\omega\right) = 0 \quad \mbox{in} \ \  \Rd \times (0,\infty).
\end{equation}
Here $\ep > 0$ is a small parameter which tends to zero, $H=H(p,y,\omega)$ is the Hamiltonian and $A=A(p,y,\omega)$ is a (possibly) degenerate  diffusion matrix. Both $A$ and $H$ depend on a parameter $\omega \in \Omega$, where $(\Omega,\F,\P)$ is a probability space. We assume that $\P$ is stationary ergodic with respect to translations on $\Rd$ and that  $A$ and $H$ are stationary.

\smallskip

The basic question in the stochastic homogenization of~\eqref{e.VHJ} is the existence of a deterministic effective Hamiltonian~$\overline H$ such that the solutions $u^\ep$ to~\eqref{e.VHJ} converge, as $\ep \to 0$, locally uniformly and  with probability one, to the solution to the effective equation
\begin{equation}\label{HJh}
u_t + \overline H(Du) = 0 \quad \mbox{in} \ \  \Rd \times (0,\infty).
\end{equation}
When $H$ is convex with respect to the $p$ variable and coercive, this was first proved independently by Souganidis~\cite{S} and Rezakhanlou and Tarver~\cite{RT} for first order  Hamilton-Jacobi equations, and later extended to the viscous setting by Lions and Souganidis \cite{LS2} and Kosygina, Rezakhanlou and Varadhan \cite{KRV}. See also Armstrong and Souganidis~\cite{ASo1, ASo3} and Armstrong and Tran~\cite{AT2} for generalizations and alternative arguments. \smallskip

In periodic homogenization the convergence and, hence, homogenization rely on the existence of correctors (see Lions, Papanicolaou and Varadhan~\cite{LPV}). The random setting is, however, fundamentaly different. 
\smallskip

Following Lions and Souganidis~\cite{LS1}, a corrector associated with a direction $p\in \R^d$ is a solution $\chi$ to the corrector equation 
\be\label{eq:correctorIntro}
-{\rm tr}(A(D\chi(x)+p,x,\omega)D^2\chi(x)) +H(D\chi(x)+p, x, \omega)=\bar H(p) \quad {\rm in } \ \  \R^d
\ee
which has a sublinear growth at infinity, that is, with probability one,  
\be\label{cond.sublin}
\lim_{|x|\to +\infty} \frac{\chi(x, \omega)}{|x|} = 0. 
\ee

It was shown in \cite{LS1} that in general such solutions do not exist; note that main point is the existence of solutions satisfying \eqref{cond.sublin}. \smallskip

Not knowing how to find correctors is the main reason that the theory of homogenization in random media is rather complicated and required the development of new arguments. General qualitative results in the references cited earlier required the quasiconvexity assumption. A more direct approach to prove homogenization (always in the convex setting), which is based on weak convergence methods and yields only convergence in probablitity, was put forward by Lions and Souganidis~\cite{LS3}. Our approach here is close in spirit to the one of \cite{LS3}.   With the exception of a case with Hamiltonians of a very special form (see Armstrong, Tran and Yu \cite{armstrong2015stochastic, armstrong2016stochastic}), the main results known in nonconvex settings are quantitative. That is it is necessary to make some strong assumptions on the environment (finite range dependence) and to use sophisticated concentration inequalities to prove directly that the solutions of the oscillatory problems converge; see, for example, Armstrong and Cardaliaguet~\cite{armstrong2015stochastic} and Feldman and Souganidis~\cite{feldman2016homogenization}. It should be noted that the counterexamples of Ziliotto~\cite{ziliotto2016stochastic} and \cite{feldman2016homogenization} yield that in the setting of nonconvex homogenization in random media is not possible to prove the existence of correctors for all directions.
\smallskip


Our main result states that a  corrector in the direction $p$ exists provided  $p$ is an extreme point of the convex hull of the sub level set $\{q\in \R^d: \bar H(q)\leq \bar H(p)\}$.  For instance, this is the case  if the law of the pair $(A,H)$ under $\P$ is radially symmetric and $A,H$ satisfy some structure conditions. 
\smallskip

This kind of result is already known in the context of first passage percolation, where the correctors are known as Buseman function;  see, for example, Licea and Newman~\cite{LN}. The techniques we use here are  strongly inspired by the arguments of Damron and Hanson~\cite{damron2014busemann}. There the authors build a type of weak solutions and prove that, when the time function is strictly convex,  they are  are actually genuine Buseman functions. 

\section{The assumptions and the main result} 

The underlying probability space is denoted by $(\Omega, \F, \P)$, where $\Omega$ is a Polish space, $\F$ is the Borel $\sigma-$field on $\Omega$ and $\P$ is a Borel probability measure. We assume that there exists a one-parameter group $(\tau_x)_{x\in \R^d}$ of measure preserving transformations on $\Omega$, that is  $\tau_x:\Omega\to \Omega$ preserves the measure $\P$ for any $x\in \R^d$ and $\tau_{x+y}= \tau_x\circ \tau_y$ for $x,y \in \Rd.$  The maps $A:(\R^d\backslash\{0\})\times \R^d\times \Omega\to \mathbb S^{d,+},$  the set of $d\times d$ real symmetric and nonnegative matrices,  and $H:\R^d\times \R^d\times \Omega\to \R$ are supposed to be continuous in all variables and stationary, that is, for all 
$p\in \R^d\backslash\{0\}, x,z\in \Rd$ and   $\omega\in \Omega$, 
$$
(A,H)(p,x,\tau_z\omega)= (A,H)(p, x+z, \omega).
$$

We also remark that equations below, unless otherwise specified,  are understood in the Crandall-Lions viscosity sense. 
\smallskip

To avoid any unnecessary assumptions, in what follows we state a general condition, which we call assumption $({\bf H})$, on the support of $\P$. 
\smallskip

{\bf Assumption $({\bf H})$:}
We assume  that, for any $p\in \R^d$, the approximate corrector equation \be\label{eq:eqdelta}
\delta v^{\delta,p} -{\rm tr}(A(Dv^{\delta,p}+p,x,\omega)D^2v^{\delta,p}) +H(Dv^{\delta,p}+p, x, \omega)=0 \quad {\rm in }\ \  \R^d,
\ee
has a comparison principle, and that, for any $R>0$, there exists $C_R>0$ such that, if  $|p|\leq R$, then the unique  solution $v^{\delta,p}$ to \eqref{eq:eqdelta} satisfies
$$
\|\delta v^{\delta,p}\|_\infty+ \|Dv^{\delta,p} \|_\infty \leq C_R. 
$$
Conditions ensuring the comparison principle are well documented; see, for instance, the Crandall, Ishii, Lions ``User's Guide''\cite{CIL}. Given the comparison principle, it is well-known that  
$$
\|v^{\delta,p}(\cdot,\omega)\|_\infty\leq \sup_{x\in \R^d} |H(0,x,\omega)|/\delta, 
$$ 
so that the $L^\infty$-assumption on $\delta v^\delta$ is not very restrictive. 
The Lipschitz bound, however, is more subtle and relies in general on a coercivity condition on the Hamiltonian. Such a structure condition is discussed, in particular,  in \cite{LS2}. 
\smallskip

Our main result is stated next.

\begin{thm}\label{main1} Assume $({\bf H})$ and, in addition,  suppose that homogenization holds in probability, that is, for any $p\in \R^d$, the family $(\delta v^{\delta,p}(0,\cdot))_{\delta>0}$ converges, as $\delta \to 0$,  in probability to some constant $-\bar H(p)$, where $\bar H:\R^d\to\R$ is a continuous and coercive map. Let $p\in \R^d$ be an extreme point of the convex hull of the sub level-set $\{q\in \R^d: \bar H(q)\leq \bar H(p)\}$. Then, for $\P-$a.e. $\omega\in \Omega$, there exists a  corrector $\chi:\R^d\times \Omega \to \Rd$ associated with $p$ and $\omega$, that is a  Lipschitz continuous  solution to \eqref{eq:correctorIntro} satisfying \eqref{cond.sublin}. 
\end{thm}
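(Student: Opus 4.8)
The natural strategy is to extract the corrector as a limit of (translates of) the approximate correctors $v^{\delta,p}$, exploiting the extreme-point hypothesis to control the sign/structure of the gradients. First I would fix $p$ an extreme point of $\conv\{q : \bar H(q) \le \bar H(p)\}$ and separate it: there is a vector $\xi \in \R^d$ (a ``supporting direction'') and a value $\lambda = \bar H(p)$ such that $\langle \xi, q - p\rangle < 0$ for every $q$ in the sublevel set with $q \ne p$, i.e. $p$ is the unique maximizer of $q \mapsto \langle \xi, q\rangle$ over $\{ \bar H \le \lambda\}$. The point of this is that when one passes to the limit in the approximate correctors, the ``effective slope'' of the limiting function must lie in the sublevel set $\{\bar H \le \bar H(p)\}$ (by the homogenization-in-probability hypothesis combined with the subadditive/monotone structure), and the extreme-point property is exactly what forces that limiting slope to equal $p$ itself rather than some other point of the sublevel set — this is the rigidity mechanism that rules out the Ziliotto-type obstruction and plays the role of strict convexity of the time constant in the first-passage-percolation analogy with \cite{damron2014busemann}.

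Concretely, I would proceed as follows. (i) Using assumption $({\bf H})$, the functions $w^{\delta} := v^{\delta,p}(\cdot,\omega) - v^{\delta,p}(0,\omega)$ are uniformly Lipschitz (constant $C_R$, $R=|p|$) and vanish at the origin, hence by Arzel\`a--Ascoli, along subsequences $\delta_n \to 0$ and on a set of full probability, $w^{\delta_n} \to w$ locally uniformly, with $w$ Lipschitz and $w(0)=0$; by stability of viscosity solutions and $\delta_n v^{\delta_n,p}(x,\omega) \to -\bar H(p)$ (upgraded from convergence in probability along a subsequence to a.s. convergence), $w$ solves the corrector equation \eqref{eq:correctorIntro}. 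The real work is (ii): showing $w$ has sublinear growth, equivalently that $w(x)/|x| \to 0$. For this I would introduce the ``mean slope'' / asymptotic shape of $w$: by stationarity and the ergodic theorem applied to increments of $w$ (or to $v^{\delta,p}$ directly before passing to the limit), the radial averages $w(x)/|x|$ converge; call the possible limiting linear behavior ``directional slopes.'' One shows via the homogenization hypothesis that any such limiting slope $q$ satisfies $\bar H(q) \le \bar H(p) = \lambda$, since $w + q\cdot x$ is, asymptotically, a solution with effective Hamiltonian value $\le \lambda$ — and simultaneously, testing against the linear function $\langle \xi, \cdot\rangle$ and using that $v^{\delta,p}$ was built with the ``$+p$'' shift, one gets $\langle \xi, q\rangle \ge 0 = \langle \xi, p - p\rangle$, forcing $q = p$ by the strict separation. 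Then $w(x) - p\cdot x$ — wait, more precisely $w$ itself — must be genuinely sublinear, because a nonzero asymptotic slope has been excluded.

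I would organize (ii) carefully: the cleanest route is to argue by contradiction. If $w$ is not sublinear, then $\limsup_{|x|\to\infty} w(x)/|x| =: a > 0$ (or the analogous statement with $\liminf < 0$); pick a sequence $x_k$ with $|x_k|\to\infty$ realizing a direction $e$ and slope $a$, and consider the rescaled functions $w_k(y) := w(x_k + R_k y)/R_k$ with $R_k = |x_k|/2$, say. These are uniformly Lipschitz, so converge (subsequentially) to a Lipschitz $W$ which, by the homogenization hypothesis applied on large balls centered near $x_k$ together with the equicontinuity, must be a subsolution of $\bar H(DW) \le \bar H(p)$ in a neighborhood, and satisfies $W(0) \ge \langle$ something $\rangle$ with a nontrivial linear lower bound; extracting the effective slope of $W$ puts it in $\{\bar H \le \bar H(p)\}$, while the construction forces it to have positive component along $e$ in a way incompatible with $p$ being the extreme/exposed point — contradiction. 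I expect the main obstacle to be precisely this step: making rigorous the passage from the approximate correctors to a statement about the ``asymptotic slope'' of the limit $w$ and identifying that slope with $p$. The delicate points are (a) upgrading convergence in probability to almost-sure convergence uniformly enough to pass to the limit in the PDE along a single subsequence, (b) the absence of any convexity on $H$, so that one cannot use variational (metric/optimal-control) representations and must work purely at the level of viscosity sub/supersolutions and the comparison principle from $({\bf H})$, and (c) correctly handling the viscous (possibly degenerate) second-order term when rescaling, which I would absorb using the $\ep\,\tr(A D^2)$ scaling that sends the diffusion to a vanishing-viscosity perturbation on the macroscopic scale. Once the slope is pinned to $p$, sublinearity \eqref{cond.sublin} and hence the theorem follow.
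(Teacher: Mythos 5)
Your overall plan --- extract a corrector from the approximate correctors $v^{\delta,p}$, determine its asymptotic slope, and use the extremal-point hypothesis to force that slope to equal $p$ --- is the right idea and matches the spirit of the paper's proof and of the Damron--Hanson analogy you cite. However, two steps have genuine gaps.

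First, in step (ii) you want to apply the ergodic theorem to the increments of the subsequential limit $w$, but those increments are \emph{not} stationary: the subsequence $\delta_n$ along which $w^{\delta_n}(\cdot,\omega)$ converges may depend on $\omega$, and even along a deterministic subsequence a pointwise $\omega$-by-$\omega$ limit of stationary objects need not be stationary. Indeed the paper remarks explicitly that the corrector produced is \emph{not} expected to have stationary increments, so this obstruction is not merely technical. The device that makes the ergodic argument work in the paper is a lift: one considers the push-forward measures $\mu_{\delta,p}=\Phi_{\delta,p}\sharp\P$ on the enlarged space $\Omega\times\Theta\times[-C_R,C_R]$, where $\Theta$ is a compact metric space of $C_R$-Lipschitz functions vanishing at the origin, and extracts a weak limit $\mu$ along a subsequence. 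One then proves that $\mu$ is invariant under the joint translation $\tilde\tau_x(\omega,\theta,s)=(\tau_x\omega,\,\theta(\cdot+x)-\theta(x),\,s)$, and applies the ergodic theorem to the $\tilde\tau$-invariant measure $\mu$ on the enlarged space, producing a $\mu$-measurable random asymptotic slope $\br$. Your proposal never performs this lift; you correctly flag the step as ``the main obstacle,'' but without the lift the ergodic step simply does not go through.

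Second, your extremal-point step silently replaces ``extreme point of $\conv S$'' by ``exposed point of $\conv S$'': you posit a linear functional $\xi$ that is \emph{uniquely} maximized over the sublevel set at $p$. Not every extreme point is exposed, so this strengthens the hypothesis of the theorem. Moreover, the inequality $\langle\xi,q\rangle\ge 0$ that you invoke to pin the slope down to $p$ is asserted without derivation, and I do not see where it would come from. The paper's argument is different and closes cleanly: one shows $\E_\mu[\theta(x)]=0$ for all $x$, hence $\E_\mu[\br]=0$, so the barycenter of the law of $p+\br$ is $p$; one then shows via the corrector identity for $\tilde\theta(x)=\theta(x)-\br\cdot x$ that $\bar H(p+\br)=\bar H(p)$, i.e.\ $p+\br\in S$ $\mu$-a.s. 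Since $p$ is an extreme point of $\conv S$ and a probability measure on $\conv S$ with barycenter at an extreme point must be a Dirac mass there, one concludes $\br=0$ $\mu$-a.s., which gives the sublinearity \eqref{cond.sublin}. This barycenter argument covers all extreme points, not just exposed ones, and it supplies precisely the separation step your proposal leaves open; you should adopt it in place of the supporting-hyperplane and rescaling heuristics.
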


Some observations and remarks are in order here. 
\smallskip

We begin noting that that we do not know if  the corrector $\chi$ has stationary increment, and we do not expect this to be true  in general.
\smallskip

The existence of a  corrector yields that, in fact,  the $\delta
v^{\delta,p}(0,\omega)$'s converge to  $-\bar H(p)$ for $\P-$a.e. $\omega\in
\Omega$; see Proposition 1.2 in \cite{LS1}. In the rest of the paper we will use this fact repeatedly. Note also  that convexity plays absolutely no role here.
\smallskip

Our result readily applies to the case where  (${\bf
H}$) holds, the law of the pair $(A,H)$ under $\P$ is radially symmetric, and $A=A(p,x,\omega)$ and $H=H(p,x,\omega)$
are homogeneous in $p$ of degree $0$ and $1$ respectively; this is stated in Corollary \ref{cor:cor2}. Moreover, since $\bar H(p)=\bar
c|p|$ for some positive $\bar c$,  Theorem \ref{main1} implies the existence
of a corrector for any direction $p$.  Note that this case covers the homogenization
of equations of mean curvature type and the result is new. Other known results for such equations are quantitative.

\smallskip

This result  also  extends to the case where $H$ satisfies, for all $p,x \in \Rd$, $\omega \in \Omega$ and  $\lambda
\in [0,1],$ 
$$
0\leq H(\lambda p, x,\omega)\leq \lambda H(p,x,\omega).
$$
Then there exists a corrector for any direction $p$ such that $\bar H(p)$
is positive. 
Indeed, following Corollary \ref{cor:cor2}, homogenization holds in probability
for any direction $p$ and $\bar H(p)= \bar c(|p|)$ for some map $\bar c$
which is increasing when positive. 
\smallskip

If $H$ is convex in $p$ and $A$ is independent of $p$, our 
proof implies that, for any $p\in \R^d$, the limit  $\lim_{\delta \to 9}\delta v^{\delta,p}(0,\cdot)$
exists in probability; see Proposition \ref{prop:CvInProba}. This result and its the proof are very much in the flavor of \cite{LS3}. 
\smallskip

Finally  we note that our arguments  also yield the existence of a
corrector  in some directions and, thus, homogenization, 
for nonconvex Hamiltonians and $p$ dependent $A$.  More
precisely, for any direction $p$, there exists a constant $\bar c$ such that
$p$  belongs to the convex hull of directions $p'$ for  which a corrector
exists with  associated homogenized constant equal to $\bar c$; see Corollary
\ref{cor:cor1}.

\section{The Proof of Theorem~\ref{main1}}
Fix $R>0$, let $C_R$ be as in  $({\bf H})$ and  define the metric space 
$$
\Theta := \left\{\theta \in C^{0,1}(\R^d): \theta(0)=0 \; {\rm and}\; \|D\theta\|_\infty\leq C_R\right\}
$$
with distance, for all $\theta_1,\theta_2\in \Theta,$
$$
d(\theta_1,\theta_2):= \sup_{x\in \R^d} \frac{|\theta_1(x)-\theta_2(x)|}{1+|x|^{2}}.
$$
It is immediate that  $\Theta$ is a compact.

\smallskip

Next we enlarge the probability space to  $\tilde \Omega := \Omega\times \Theta\times [-C_R,C_R]$, which is endowed  with the one parameter group of transformations $\tilde \tau_x:\tilde \Omega\to \tilde \Omega$ defined, for $x\in
\R^d$, by  
$$
\tilde \tau_x (\omega,\theta,s) = (\tau_x\omega, \theta(\cdot+x)-\theta(x),s);
$$
below we abuse of notation and write  $\tilde \tau_x (\theta) = \theta(\cdot+x)-\theta(x)$.
\smallskip

Fix $p\in \R^d$ with $|p|\leq R,$ let $v^{\delta,p}$ be the solution to \eqref{eq:eqdelta}, define the map $\Phi_{\delta,p}:\Omega\to \tilde \Omega$ by  
$$
\Phi_{\delta,p}(\omega)= (\omega, v^{\delta,p}(\cdot, \omega)-v^{\delta,p}(0, \omega),-\delta v^{\delta,p}(0,\omega)),
$$
which is clearly measurable, and consider the push-forward measure 
$$
\mu_{\delta,p} = \Phi_{\delta,p} \sharp \P,
$$
which  is  a Borel probability measure on $\tilde \Omega$. 
\smallskip

Note that, since the first marginal of $\mu_{\delta,p}$ is $\P$ and  $\Omega$ is a Polish space while $\Theta\times [-C_R,C_R]$ is  compact, the family of measures $(\mu_{\delta,p})_{\delta >0}$ is tight.

\smallskip
 
Let $\mu$ be a limit, up to a subsequence $\delta_n \to 0$, of the $\mu_{\delta_n,p}$'s. 

\begin{lem}\label{lem:lemma5}
For each $x\in \R^d$, the transformation $\tilde \tau_x$ preserves the measure $\mu$.
\end{lem}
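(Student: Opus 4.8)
The plan is to verify the invariance of the limiting measure $\mu$ under each $\tilde\tau_x$ by first establishing an \emph{approximate} invariance for the pre-limit measures $\mu_{\delta,p}$ and then passing to the limit. The key observation is that, although $\Phi_{\delta,p}$ itself is not equivariant (because $v^{\delta,p}$ does not have stationary increments), it fails to be so only by an error that is small in a suitable sense as $\delta\to 0$. Concretely, stationarity of $A$ and $H$ together with the comparison principle in $({\bf H})$ gives that, for fixed $x$, the function $y\mapsto v^{\delta,p}(y+x,\omega)-v^{\delta,p}(x,\omega)$ and the function $y\mapsto v^{\delta,p}(y,\tau_x\omega)$ both solve \eqref{eq:eqdelta} (the first with the same right-hand side after the translation of coefficients), so by uniqueness they differ only through the zero-order $\delta$-term; one gets a bound of the form $\|v^{\delta,p}(\cdot+x,\omega)-v^{\delta,p}(x,\omega)-v^{\delta,p}(\cdot,\tau_x\omega)\|$ on compact sets controlled by $\delta$ times the $L^\infty$-bounds from $({\bf H})$. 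Likewise the last coordinate satisfies $-\delta v^{\delta,p}(x,\omega)=-\delta v^{\delta,p}(0,\tau_x\omega)+O(\delta\cdot |{\rm stuff}|)$, and in the metric $d$ of $\Theta$ the discrepancy between $\tilde\tau_x\circ\Phi_{\delta,p}$ and $\Phi_{\delta,p}\circ\tau_x$ tends to $0$ uniformly in $\omega$ as $\delta\to 0$.

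Granting that, the argument runs as follows. Fix $x\in\R^d$ and a bounded continuous test function $f$ on $\tilde\Omega$. We want $\int f\,d\mu=\int f\circ\tilde\tau_x\,d\mu$. Since $\mu_{\delta_n,p}\to\mu$ weakly, both sides are limits along $\delta_n$ of the corresponding integrals against $\mu_{\delta_n,p}$. By definition of push-forward, $\int f\,d\mu_{\delta_n,p}=\int_\Omega f(\Phi_{\delta_n,p}(\omega))\,d\P(\omega)$ and $\int f\circ\tilde\tau_x\,d\mu_{\delta_n,p}=\int_\Omega f(\tilde\tau_x\Phi_{\delta_n,p}(\omega))\,d\P(\omega)$. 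Now insert $\Phi_{\delta_n,p}(\tau_x\omega)$ as an intermediary: using uniform continuity of $f$ on the compact-in-the-last-two-coordinates space together with the $\delta_n\to 0$ smallness of $d(\tilde\tau_x\Phi_{\delta_n,p}(\omega),\Phi_{\delta_n,p}(\tau_x\omega))$, we get
$$
\left|\int_\Omega f(\tilde\tau_x\Phi_{\delta_n,p}(\omega))\,d\P(\omega)-\int_\Omega f(\Phi_{\delta_n,p}(\tau_x\omega))\,d\P(\omega)\right|\longrightarrow 0.
$$
Finally, since $\tau_x$ preserves $\P$, $\int_\Omega f(\Phi_{\delta_n,p}(\tau_x\omega))\,d\P(\omega)=\int_\Omega f(\Phi_{\delta_n,p}(\omega))\,d\P(\omega)$, which converges to $\int f\,d\mu$. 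Combining, $\int f\circ\tilde\tau_x\,d\mu=\int f\,d\mu$, and since $f$ was arbitrary, $\tilde\tau_x\sharp\mu=\mu$.

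The main obstacle, and the only place real work is needed, is the quantitative comparison estimate for the approximate correctors, i.e. showing that $v^{\delta,p}(\cdot+x,\omega)-v^{\delta,p}(x,\omega)$ and $v^{\delta,p}(\cdot,\tau_x\omega)$ are $o(1)$-close as $\delta\to0$ on compact sets, in a manner uniform in $\omega$. One has to be careful that the comparison principle is applied to equations whose zero-order coefficient is $\delta>0$ (so comparison is available) and to track that the only obstruction to the two functions coinciding is the constant-in-$y$ term $\delta(v^{\delta,p}(x,\omega)-v^{\delta,p}(0,\tau_x\omega))$, which is $O(\delta)$ by the $\|\delta v^{\delta,p}\|_\infty$ bound in $({\bf H})$; feeding this into the comparison principle as a perturbation of one equation by an $O(\delta)$ constant gives the desired $O(\delta)$ bound on the sup-norm difference over all of $\R^d$, hence in particular in the metric $d$ of $\Theta$. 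A minor secondary point is to make sure the limit measure $\mu$ is genuinely a Borel probability measure on $\tilde\Omega$ (so that weak convergence against bounded continuous $f$ is legitimate), which follows from tightness already noted in the excerpt and from $\Theta\times[-C_R,C_R]$ being compact with $\Omega$ Polish.
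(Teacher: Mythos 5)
Your overall plan is the right one and is the same as the paper's: establish that $\tilde\tau_x\circ\Phi_{\delta,p}$ and $\Phi_{\delta,p}\circ\tau_x$ differ by $o(1)$ as $\delta\to0$, then pass to the limit using weak convergence and $\tau_x\sharp\P=\P$. The second paragraph of your argument, which carries out the limiting step, is fine. The problem is in the justification of the approximate equivariance, where what you write does not hold up.

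The key fact you are missing is that $v^{\delta,p}$ is \emph{exactly} stationary: by stationarity of $(A,H)$, the function $y\mapsto v^{\delta,p}(y+x,\omega)$ solves \eqref{eq:eqdelta} with environment $\tau_x\omega$, so by the comparison principle (uniqueness) $v^{\delta,p}(y+x,\omega)=v^{\delta,p}(y,\tau_x\omega)$ for all $y$. Consequently the $\Theta$-components of $\tilde\tau_x\Phi_{\delta,p}(\omega)$ and $\Phi_{\delta,p}(\tau_x\omega)$ agree \emph{identically}, with no error at all:
$v^{\delta,p}(\cdot+x,\omega)-v^{\delta,p}(x,\omega)=v^{\delta,p}(\cdot,\tau_x\omega)-v^{\delta,p}(0,\tau_x\omega)$.
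Your version of this step does not work as stated. First, $y\mapsto v^{\delta,p}(y+x,\omega)-v^{\delta,p}(x,\omega)$ does not solve \eqref{eq:eqdelta}: subtracting a constant shifts the right-hand side by $\delta$ times that constant because of the zero-order term. Second, and more seriously, the bound you claim is quantitatively wrong. If two solutions of the $\delta$-equation have right-hand sides differing by a constant $c$, the comparison principle yields a sup-norm difference of $|c|/\delta$, not $|c|$. So if the right-hand-side discrepancy were $\delta\bigl(v^{\delta,p}(x,\omega)-v^{\delta,p}(0,\tau_x\omega)\bigr)$, as you suggest, comparison would only give a bound $|v^{\delta,p}(x,\omega)-v^{\delta,p}(0,\tau_x\omega)|$, which a priori is $O(1/\delta)$, not $O(\delta)$. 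The argument is rescued only because that constant is in fact \emph{zero} by stationarity; but you never invoke this, and your stated reasoning would fail if it were merely small.

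The genuine $o(1)$ discrepancy is confined to the last coordinate: $\tilde\tau_x$ leaves it unchanged at $-\delta v^{\delta,p}(0,\omega)$, while $\Phi_{\delta,p}(\tau_x\omega)$ carries $-\delta v^{\delta,p}(0,\tau_x\omega)=-\delta v^{\delta,p}(x,\omega)$; these differ by $\delta\bigl(v^{\delta,p}(x,\omega)-v^{\delta,p}(0,\omega)\bigr)$, which is $O(\delta)$ by the uniform Lipschitz bound $\|Dv^{\delta,p}\|_\infty\le C_R$, not by a comparison argument. With the $\Theta$-coordinate exactly matched and the last coordinate $O(\delta)$-close, the rest of your limiting argument goes through; but you should replace the whole ``approximate comparison'' discussion with the stationarity identity and attribute the $O(\delta)$ in the scalar slot to the Lipschitz bound.
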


\begin{proof}
Fix a continuous and bounded map $\xi:\tilde \Omega\to \R$. Since the map $\tilde \omega\to \xi(\tau_x(\tilde \omega))$ is  continuous
and bounded and  $\mu_{\delta_n,p}$ converges weakly to $\mu$, we have 
$$
\begin{array}{rl}
\ds \int_{\tilde \Omega} \xi(\tilde \omega)\tau_x\sharp \mu(d\tilde \omega) 
 \; = & \ds
\int_{\tilde \Omega} \xi(\tau_x(\tilde \omega)) \mu(d\tilde \omega) 
=   \;
\lim_n
\int_{\tilde \Omega} \xi(\tau_x(\tilde \omega)) \mu_{\delta_n,p}(d\tilde \omega).
\end{array}
$$ 
In view of the definition of $\tilde \tau$ and $\mu_{\delta_n}$, we get 
$$
\begin{array}{rl}
 \int_{\tilde \Omega} \xi(\tau_x(\tilde \omega)) \mu_{\delta_n,p}(d\tilde \omega)
\; = & \ds
\int_{ \Omega} \xi(\tau_x\omega, v^{\delta_n,p}(x+\cdot,\omega)-v^{\delta_n,p}(x,\omega), -{\delta_n} v^{\delta_n,p}(0, \omega)) d\P(\omega) \\[1.5mm]
\; = &\ds
\int_{ \Omega} \xi(\tau_x\omega, v^{\delta_n,p}(\cdot,\tau_x\omega)-v^{\delta_n,p}(0,\tau_x\omega), -{\delta_n} v^{\delta_n,p}(-x, \tau_x\omega)) d\P(\omega) \\[1.5mm]
\; = &\ds
\int_{ \Omega} \xi(\omega, v^{\delta_n,p}(\cdot,\omega)-v^{\delta_n,p}(0,\omega), -{\delta_n} v^{\delta_n,p}(-x, \omega)) d\P(\omega),
\end{array}
$$
the last line being a consequence of the stationarity of $\P$.
\smallskip

Using that  $v^{\delta_n,p}$ is   Lipschitz continuous uniformly in $\delta$  and $\xi$ is continuous on the  set $\tilde \Omega$, we find  
$$
\begin{array}{rl}
\ds \int_{\tilde \Omega} \xi(\tau_x(\tilde \omega)) \mu_{\delta_n,p}(d\tilde \omega)
\; = & \ds
\int_{ \Omega} \xi(\omega, v^{\delta_n,p}(\cdot,\omega)-v^{\delta_n,p}(0,\omega), -{\delta_n} v^{\delta_n,p}(0, \omega)+O(\delta_n)) d\P(\omega)\\
\; = & \ds
\int_{\tilde \Omega} \xi(\tilde \omega) d\mu_{\delta_n,p}(\tilde \omega) + o(1).
\end{array}
$$
Letting $n\to+\infty$ we finally get
$$
\int_{\tilde \Omega} \xi(\tilde \omega)\tau_x\sharp \mu(d\tilde \omega) 
=
\int_{\tilde \Omega} \xi(\tilde \omega) d\mu(\tilde \omega),
$$
and, hence, the claim.
\end{proof}
The next lemma  asserts that there exists some  $\bar c=\bar c(p)$ such that  the restriction of $\mu$ to the last component is just a Dirac mass. If we know that homogenization holds, then $\bar
c(p)$ is of course nothing but $\bar H(p)$.  Note that in what follows,  abusing once again the notation, we 
denote by $\mu$ the restriction of $\mu$ to the first two components $\Omega\times
\Theta$. 
\smallskip

\begin{lem}\label{lem:lem+} There exits a constant $\bar c=\bar c(p, (\delta_n)_{n\in \N})$ such that, for any Borel measurable set $E\subset \Omega\times \Theta$, $$
\mu(E\times [-M_p,M_p])= \mu(E\times \{\bar c\}). 
$$ 
In particular, the sequence $(\delta_n v^{\delta_n,p}(0,\cdot))_{n\in
\N}$ converges in probability to $-\bar c$.
\end{lem}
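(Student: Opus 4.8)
The plan is to analyze the last coordinate $s \in [-C_R, C_R]$ of a point $(\omega,\theta,s) \in \tilde\Omega$ under the limit measure $\mu$, and to show that this coordinate is almost surely invariant under the flow $\tilde\tau_x$. By construction of $\Phi_{\delta_n,p}$, the $s$-coordinate equals $-\delta_n v^{\delta_n,p}(0,\omega)$, which is a number independent of $x$; however, after applying $\tilde\tau_x$ we saw in the proof of Lemma \ref{lem:lemma5} that this coordinate becomes $-\delta_n v^{\delta_n,p}(-x,\omega)$, which differs from $-\delta_n v^{\delta_n,p}(0,\omega)$ only by $O(\delta_n |x|)$ by the uniform Lipschitz bound in $({\bf H})$. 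Hence in the limit $n\to\infty$ the function $(\omega,\theta,s)\mapsto s$ is $\tilde\tau_x$-invariant for every $x$, i.e. it is an invariant function of the measure-preserving system $(\tilde\Omega,\mu,(\tilde\tau_x)_x)$ established in Lemma \ref{lem:lemma5}.

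The key step is then to invoke ergodicity. First I would argue that $(\tilde\Omega,\mu,(\tilde\tau_x)_x)$ is ergodic: since the first marginal of each $\mu_{\delta_n,p}$ is $\P$, the first marginal of $\mu$ is $\P$, and the group action on the first coordinate is just $(\tau_x)_x$, which is ergodic by hypothesis; a standard argument (the $\sigma$-algebra of $\tilde\tau$-invariant sets pushes forward to a sub-$\sigma$-algebra of the $\tau$-invariant sets on $\Omega$, which is trivial) gives that any $\tilde\tau$-invariant \emph{function} on $\tilde\Omega$ is $\mu$-a.s. constant. Wait — that last implication is exactly what needs care, because a function measurable with respect to the invariant $\sigma$-algebra need not factor through the first coordinate. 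The cleaner route is: the map $s:\tilde\Omega\to[-C_R,C_R]$ is $\tilde\tau$-invariant, so for each threshold $t$ the set $\{s\le t\}$ is a $\tilde\tau$-invariant Borel set; by ergodicity $\mu(\{s\le t\})\in\{0,1\}$ for every $t$; therefore $s$ equals a single constant $\bar c = \bar c(p,(\delta_n)_n)$ for $\mu$-a.e.\ point. Then for any Borel $E\subset\Omega\times\Theta$ we get $\mu(E\times[-C_R,C_R]) = \mu(E\times\{\bar c\})$, which is the asserted identity (with $M_p = C_R$).

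Finally, to get convergence in probability of $\delta_n v^{\delta_n,p}(0,\cdot)$ to $-\bar c$, I would unwind the definition of the pushforward: for any $\eta>0$, the set $\{|s+\bar c|\ge\eta\}$ has $\mu$-measure zero, it is closed in $\tilde\Omega$, so by the Portmanteau theorem (weak convergence $\mu_{\delta_n,p}\rightharpoonup\mu$) we get $\limsup_n \mu_{\delta_n,p}(\{|s+\bar c|\ge\eta\}) \le \mu(\{|s+\bar c|\ge\eta\}) = 0$; but $\mu_{\delta_n,p}(\{|s+\bar c|\ge\eta\}) = \P(|\delta_n v^{\delta_n,p}(0,\cdot)+\bar c|\ge\eta)$, giving the claim. (Here one must use that $\{|s+\bar c|\ge\eta\}$ is closed, not merely Borel, hence the choice of the closed set rather than its open complement.)

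The main obstacle I anticipate is the ergodicity step, specifically justifying rigorously that a $\tilde\tau$-invariant set in $\tilde\Omega$ has trivial $\mu$-measure. One has to be a little careful that the enlarged action $\tilde\tau_x(\omega,\theta,s) = (\tau_x\omega,\theta(\cdot+x)-\theta(x),s)$ still yields an ergodic system for $\mu$ — this is not automatic for a general joining, but here it follows because the first-coordinate projection is measure-preserving onto the ergodic system $(\Omega,\P,\tau)$ and any invariant set, being invariant, has indicator that is a limit of functions of the first coordinate along orbits; alternatively, and more robustly, one observes that we never actually need full ergodicity of $\mu$ — we only need that the \emph{particular} invariant function $s$ is constant, and since $s$ depends only on the data through $\delta_n v^{\delta_n,p}(0,\omega)$ up to the limit, one can instead pass to a further subsequence along which $\delta_n v^{\delta_n,p}(0,\cdot)$ converges in law and identify the limit law as supported on a point using the invariance directly. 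I would present the ergodicity argument as the clean statement but keep this fallback in mind.
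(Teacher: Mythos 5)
Your proposal correctly identifies the key observation, that the last coordinate $s$ defines a $\tilde\tau$-invariant function on $\tilde\Omega$ (indeed $\tilde\tau_x$ fixes the third coordinate outright, so no $O(\delta_n|x|)$ estimate is needed for this). The genuine gap, which you yourself flag but do not close, is the step ``by ergodicity $\mu(\{s\le t\}) \in\{0,1\}$''. This requires ergodicity of the system $(\tilde\Omega,\mu,\tilde\tau)$, which is not established and is not automatic: a joining whose first marginal is an ergodic measure $\P$ need not be ergodic. For instance, a measure of the form $\tfrac12\,\P\otimes\delta_{\theta_0}\otimes\delta_{a}+\tfrac12\,\P\otimes\delta_{\theta_0}\otimes\delta_{b}$ with $a\neq b$ is $\tilde\tau$-invariant with first marginal $\P$, and the $\tilde\tau$-invariant set $\{s\le a\}$ has $\mu$-measure $\tfrac12$. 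Your attempted repair (``the indicator of an invariant set is a limit of functions of the first coordinate along orbits'') is not a valid argument, and the proposed fallback (pass to a further subsequence along which the law of $\delta_n v^{\delta_n,p}(0,\cdot)$ converges and ``identify the limit as a point mass using the invariance directly'') leaves exactly the same identification unexplained.

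The paper circumvents this by never invoking ergodicity of $\mu$; it uses ergodicity of $\P$ on $\Omega$, where it is a hypothesis. Partition $[-M_p,M_p]$ into small intervals $[t_k,t_{k+1}]$ and introduce the sets
$$
E_k := \bigl\{\omega\in\Omega :\ \exists\,(\theta,s)\in\Theta\times[t_k,t_{k+1}]\ \text{with}\ (\omega,\theta,s)\in\mathrm{sppt}(\mu)\bigr\}\ \subset\ \Omega.
$$
Because $\mu$ is $\tilde\tau_x$-invariant (Lemma \ref{lem:lemma5}), its support is a $\tilde\tau_x$-invariant closed set, and projecting that invariance down gives $\tau_x E_k=E_k$ in $\Omega$. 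Now one can apply the ergodicity of $\P$ to conclude $\P(E_k)\in\{0,1\}$, deduce concentration of $\mu$ on a thin strip in $s$, and shrink the mesh. The structural lesson your proposal misses is precisely this: when ergodicity is only available on the base, one should manufacture $\tau$-invariant subsets of $\Omega$ by projecting $\tilde\tau$-invariant objects (here, the support of $\mu$) to the first coordinate, rather than trying to apply ergodicity directly to $\tilde\tau$-invariant subsets of $\tilde\Omega$. Your final Portmanteau step, using that $\{|s+\bar c|\ge\eta\}$ is closed, is correct and matches the paper.
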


\begin{proof} Let $n\geq 1$ large and   $k\in \{0, \dots, 2n\}$,  set $t_k:= -M_p+M_pk/n$ and 
$$
E_k:= \{\omega\in \Omega: \  \exists \theta \in \Theta \ \text{and} 
\  \exists s\in [t_k,t_{k+1}] \ \text{such that} \  (\omega, \theta, s)\in {\rm sppt}(\mu)\}.
$$
Since  the first marginal of $\mu$ is $\P$ and 
$
\bigcup_{l=0}^{2n} E_l\times \Theta\times [-M_p,M_p] \supset {\rm sppt}(\mu),
$
there exists  $k\in  \{0, \dots, 2n\}$ such that   $\P(E_k)>0.$
\smallskip

It turns out that $E_k$ is translation invariant, that is, for each $x\in
\R^d$,     $\tau_x E_k=E_k$. 
 Indeed, if $\omega\in \tau_xE_k$, there exists 
$\theta\in \Theta$ and $s\in [t_k,t_{k+1}]$ such that $(\tau_{-x}\omega, \theta, s)\in {\rm sppt}(\mu)$ and, hence,  $\tilde \tau_{-x}(\omega, \theta(\cdot+x)-\theta(x), s)$ belongs to ${\rm sppt}(\mu)$. Since $\mu$ is invariant under $\tilde \tau_x$, so is its support. Hence $(\omega, \theta(\cdot+x)-\theta(x), s)\in {\rm sppt}(\mu)$ and $\omega$ belongs to $E_k$. The opposite implication follows in the same way.
\smallskip 
 
The ergodicity of  $\P$ yields that $\P[E_k=1]$, which means that $\mu$ is concentrated in some $E_k\times \Theta\times [t_k,t_{k+1}]$. Thus $\mu$ is also concentrated on  $\Omega\times \Theta\times [t_k,t_{k+1}]$. Letting $n\to+\infty$ implies that there exists $\bar c\in [-M_p,M_p]$ such that $\mu$ is concentrated of the set $\Omega\times \Theta\times \{\bar c\}$. \smallskip

It remains to check that $(\delta_n v^{\delta_n,p}(0,\cdot))_{n \in \N}$ converges in probability to $-\bar c$. This is a consequence of the classical Porte-Manteau Theorem, since,  for any $\ep>0$,  
$$
\begin{array}{rl}
\ds \limsup_{n\to \infty} \P[| \delta_nv^{\delta_n,p}(0,\cdot)+\bar c|\geq \ep] \;  = & \ds \limsup \mu_{\delta_n,p} [\Omega\times \Theta\times ([-M_p,M_p]\backslash (\bar c-\ep,\bar c+\ep))] \\[1.5mm]
\leq & \ds \mu [\Omega\times \Theta\times ([-M_p,M_p]\backslash (\bar c-\ep,\bar c+\ep))] =0.
\end{array}
$$
\end{proof}
The next lemma is the first step in finding a corrector and possibly identifying $\bar c$ and $\bar H(p)$, when the latter exists. 

\begin{lem}\label{lem:lemma4} Let $\bar c$ be defined by Lemma \ref{lem:lem+}. For for  $\mu-$a.e. $(\omega, \theta)\in  \Omega\times \Theta$, $\theta$ is a  solution to 
\be\label{eq:corrector}
-{\rm tr}(A(D\theta,+p,x,\omega)D^2\theta) +H(D\theta+p, x, \omega)=\bar c \quad {\rm in }\ \ \R^d.
\ee
\end{lem}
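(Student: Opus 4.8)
The plan is to pass to the limit $n\to\infty$ in the approximate corrector equation~\eqref{eq:eqdelta}, using the stability of viscosity solutions under (local) uniform convergence and the fact, from Lemma~\ref{lem:lem+}, that $\delta_n v^{\delta_n,p}(0,\cdot)$ converges in probability to $-\bar c$. The starting observation is that, by construction of $\Phi_{\delta_n,p}$, the function $\theta$ appearing in the second marginal of $\mu_{\delta_n,p}$ is exactly $v^{\delta_n,p}(\cdot,\omega)-v^{\delta_n,p}(0,\omega)$, which solves, in the viscosity sense on $\R^d$,
\be\label{eq:shifted}
\delta_n v^{\delta_n,p}(0,\omega)+\delta_n\theta(x)-{\rm tr}(A(D\theta+p,x,\omega)D^2\theta)+H(D\theta+p,x,\omega)=0.
\ee
Since $\|D\theta\|_\infty\le C_R$ uniformly and $|\delta_n\theta(x)|\le C_R\delta_n|x|\to 0$ locally uniformly, while $\delta_n v^{\delta_n,p}(0,\omega)\to-\bar c$, the ``error terms'' $\delta_n v^{\delta_n,p}(0,\omega)+\delta_n\theta(x)$ converge locally uniformly to $-\bar c$ whenever $\delta_n v^{\delta_n,p}(0,\omega)\to-\bar c$.

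\textbf{Key steps.} First I would set up a measurable subset $G\subset\Omega\times\Theta$ of full $\mu$-measure on which the limiting equation will be verified, built as follows. Introduce on $\tilde\Omega$ (or rather on $\Omega\times\Theta\times[-M_p,M_p]$) the ``residual'' functional that, to a triple $(\omega,\theta,s)$, associates a quantitative measure of how far $\theta$ is from being a viscosity sub- and supersolution of~\eqref{eq:corrector}: for instance, test against a countable dense family of smooth test functions $\varphi$ and points $x$ with rational coordinates, and record $\sup$ over touching configurations of the defect $|s+0-{\rm tr}(A(D\varphi(x)+p,x,\omega)D^2\varphi(x))+H(D\varphi(x)+p,x,\omega)-\bar c|$, appropriately localized. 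The point is that this residual is an upper semicontinuous (or at least Borel) function on $\Omega\times\Theta\times[-M_p,M_p]$ in the topology we have, because $A$ and $H$ are continuous, $\Theta$ carries the topology of local uniform convergence, and viscosity sub/supersolution inequalities are stable under local uniform convergence of the solution together with uniform convergence of the coefficients. Under $\mu_{\delta_n,p}$ this residual vanishes $\mu_{\delta_n,p}$-a.s.\ on the event $\{\delta_n v^{\delta_n,p}(0,\cdot)+\bar c$ small$\}$, whose probability tends to $1$; combined with weak convergence $\mu_{\delta_n,p}\rightharpoonup\mu$ and a portmanteau argument (upper semicontinuity gives $\limsup$ of the measure of $\{$residual$\ge\epsilon\}$ is bounded by the $\mu$-measure of its closure), one concludes that the residual vanishes $\mu$-a.s. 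Finally, since by Lemma~\ref{lem:lem+} $\mu$ is supported on $\Omega\times\Theta\times\{\bar c\}$, the third-variable slot in the residual is pinned to $\bar c$, and ``residual vanishes'' is precisely the statement that $\theta$ is a viscosity solution of~\eqref{eq:corrector}.

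\textbf{Alternative, more hands-on route.} Rather than packaging everything into one residual functional, I would more likely argue directly: by Skorokhod's representation theorem, realize the weak convergence $\mu_{\delta_n,p}\rightharpoonup\mu$ as almost sure convergence of random variables $(\omega_n,\theta_n,s_n)\to(\omega,\theta,s)$ on a common probability space, where $\theta_n$ converges to $\theta$ in $\Theta$, i.e.\ locally uniformly with a uniform Lipschitz bound $C_R$, and $s_n\to s$ with $s=\bar c$ $\mu$-a.s.\ by Lemma~\ref{lem:lem+}. Each $\theta_n$ solves~\eqref{eq:shifted} with $\omega=\omega_n$ and $\delta=\delta_n$; here one must be slightly careful that $\theta_n$ is the approximate corrector for the \emph{same} realization of the environment, which is guaranteed because the first two marginals of $\mu_{\delta_n,p}$ are coupled exactly through $\Phi_{\delta_n,p}$, so the law of $(\omega_n,\theta_n)$ is the law of $(\omega,v^{\delta_n,p}(\cdot,\omega)-v^{\delta_n,p}(0,\omega))$ and the equation holds for the Skorokhod copy as well. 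Since $A(\cdot+p,x,\omega_n)\to A(\cdot+p,x,\omega)$ and $H(\cdot+p,x,\omega_n)\to H(\cdot+p,x,\omega)$ locally uniformly by continuity of the coefficients in $\omega$, and $\delta_n v^{\delta_n,p}(0,\omega_n)+\delta_n\theta_n(x)\to-\bar c$ locally uniformly, the standard stability theorem for viscosity solutions (e.g.\ the half-relaxed limits method, or Barles' book, Thm.~1.4) gives that $\theta$ is a viscosity solution of~\eqref{eq:corrector}. Pushing this back to the original space, the set of $(\omega,\theta)$ for which $\theta$ solves~\eqref{eq:corrector} has full $\mu$-measure.

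\textbf{Main obstacle.} The delicate point is the topological/measurability bookkeeping in the stability passage: one needs that the metric $d$ on $\Theta$, which uses the weight $1/(1+|x|^2)$, indeed metrizes local uniform convergence on the compact set $\Theta$ (true, since a uniform Lipschitz and value bound turns $d$-convergence into local uniform convergence), and one needs the set ``$\theta$ is a viscosity solution of~\eqref{eq:corrector} for the environment $\omega$'' to be a Borel subset of $\Omega\times\Theta$ so that the almost-sure statements make sense. Both are routine but must be stated; the Skorokhod-coupling version sidesteps the explicit upper-semicontinuous-functional construction and is probably the cleanest to write. The one genuinely substantive input beyond soft analysis is that the Lipschitz bound from assumption~$({\bf H})$ makes the family $\{v^{\delta_n,p}(\cdot,\omega)-v^{\delta_n,p}(0,\omega)\}$ precompact in $\Theta$ with \emph{uniform} modulus, which is what lets the limiting equation be a genuine (not merely relaxed) equation with the correct $\bar c$ on the right-hand side.
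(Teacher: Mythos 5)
Your ``Key steps'' route is essentially the paper's proof: the paper defines, for each $R,\ep>0$, the set $E(R,\ep)\subset\Omega\times\Theta$ of pairs $(\omega,\theta)$ for which $\theta$ is an $\ep$-approximate sub- and supersolution of \eqref{eq:corrector} on $B_R(0)$, observes that $\mu_{\delta_n,p}(E(R,\ep))\to 1$ (since $v^{\delta_n,p}$ solves \eqref{eq:eqdelta}, $\delta_n v^{\delta_n,p}(0,\cdot)\to-\bar c$ in probability, and the $\delta_n\theta$ term is locally small), notes $E(R,\ep)$ is closed, and applies portmanteau, then sends $R\to\infty$, $\ep\to0$ --- which is precisely your upper-semicontinuous residual/portmanteau argument in cleaner clothing, since working with the $\ep$-relaxed sub/supersolution sets $E(R,\ep)$ avoids having to manufacture and localize an explicit residual functional. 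Your Skorokhod alternative is a valid variant in technical packaging, but the argument is the same in substance.
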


\begin{proof} 
Fix $R,\ep>0$ and let $E(R,\ep)$ be the set of $(\omega,\theta)\in \Omega\times \Theta$ such that $\theta$ such that, in the  
open ball $B_R(0)$,
$$
 -{\rm tr}(A(D\theta+p, x,\omega)D^2\theta) +H(D\theta+p, x, \omega)\geq \bar c-\ep
 $$
 and 
$$
 -{\rm tr}(A(D\theta+p,x,\omega)D^2\theta) +H(D\theta+p, x, \omega)\leq \bar c+\ep.
 $$ 
Recall that Lemma \ref{lem:lem+} gives that  
 $(\delta_n v^{\delta_n,p}(0,\cdot))_{n\in \N}$ converge in probability to $-\bar c$. Since  $v^{\delta_n,p}$ solves \eqref{eq:eqdelta} and is uniformly Lipschitz continuous, it follows that, as $n\to \infty$,   $\mu_{\delta_n,p}(E(R,\ep))\to 1.$  
\smallskip

Finally observing that  $E(R,\ep)$ is closed in $\Omega\times \Theta$,  we infer, using again the  Porte-Manteau Theorem,  that 
$\mu(E(R,\ep))= 1$. 
\smallskip
 
As  $R$ and $\ep$ are arbitrary, we  conclude that the set $(\omega,\theta)$ for which the equation is satisfied in the viscosity sense is of full probability. 
\end{proof}
Next we investigate some properties of $\theta$. 

\begin{lem}\label{lem:lemma6} For any $x\in \R^d$, 
$
\E_\mu\left[ \theta(x)\right]=0.
$
\end{lem}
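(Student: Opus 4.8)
The plan is to show that the random variable $\theta(x)$ on $(\Omega\times\Theta,\mu)$ has zero mean by exploiting the $\tilde\tau$-invariance of $\mu$ established in Lemma~\ref{lem:lemma5}, combined with the additive cocycle structure of the $\Theta$-component. First I would record the key identity: under the shift $\tilde\tau_{-x}$, the $\Theta$-coordinate $\theta$ is sent to $\theta(\cdot-x)-\theta(-x)$, so that evaluating the new function at the point $x$ gives $\bigl(\theta(\cdot-x)-\theta(-x)\bigr)(x)=\theta(0)-\theta(-x)=-\theta(-x)$. More generally, the "evaluation at $x$" functional $F_x(\omega,\theta):=\theta(x)$ satisfies the cocycle relation $F_{x+y}(\tilde\omega)=F_x(\tilde\omega)+F_y(\tilde\tau_x\tilde\omega)$, which is the abstract form of the telescoping identity $\theta(x+y)-\theta(0)=(\theta(x)-\theta(0))+(\theta(x+y)-\theta(x))$.

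Next I would integrate this cocycle identity against $\mu$. Since $\mu$ is $\tilde\tau_x$-invariant (Lemma~\ref{lem:lemma5}) and each $F_y$ is bounded on $\Omega\times\Theta$ by $|y|\,C_R$ (because every $\theta\in\Theta$ is $C_R$-Lipschitz with $\theta(0)=0$), all the integrals below are finite and we may apply the change of variables formula. Set $g(x):=\E_\mu[\theta(x)]=\E_\mu[F_x]$. Integrating $F_{x+y}=F_x+F_y\circ\tilde\tau_x$ and using invariance of $\mu$ in the last term gives
$$
g(x+y)=g(x)+\E_\mu\bigl[F_y\circ\tilde\tau_x\bigr]=g(x)+\E_\mu[F_y]=g(x)+g(y).
$$
Thus $g:\R^d\to\R$ is additive. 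Moreover $|g(x)|\le C_R|x|$, so $g$ is in particular continuous (indeed bounded on bounded sets); an additive function on $\R^d$ that is bounded on a neighbourhood of the origin is linear, so $g(x)=\langle a,x\rangle$ for some vector $a\in\R^d$.

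Finally I would rule out $a\ne 0$ by a growth/integrability argument. If $a\ne 0$, pick a unit vector $e$ with $\langle a,e\rangle>0$; then $g(te)=t\langle a,e\rangle\to+\infty$ as $t\to+\infty$, i.e.\ $\E_\mu[\theta(te)]\to+\infty$. But $\theta(te)$ also has a symmetric counterpart: applying the same argument at $-te$ gives $\E_\mu[\theta(-te)]=-t\langle a,e\rangle\to-\infty$. Since $\mu$ is $\tilde\tau$-invariant and $\tilde\tau_{te}$ maps the event $\{\theta(-te)\le -M\}$ (after relabelling the base point, $F_{-te}\circ\tilde\tau_{te}=-F_{te}$, up to the cocycle correction) into an event controlling $\theta(te)$, the invariance forces $g(te)$ and $g(-te)=-g(te)$ to satisfy the same one-sided bound, which is impossible unless $g(te)=0$. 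Concretely: from $F_{-x}\circ\tilde\tau_x=-F_x$ and invariance, $g(-x)=\E_\mu[F_{-x}]=\E_\mu[F_{-x}\circ\tilde\tau_x]=\E_\mu[-F_x]=-g(x)$, which is automatic, so the real input must be that the law of $\theta(x)$ under $\mu$ equals the law of $-\theta(-x)$, hence $g(x)=-g(-x)=-(-g(x))$ — consistent — and we instead use that $\mu_{\delta_n,p}$ is the law of a genuine difference $v^{\delta_n,p}(x,\omega)-v^{\delta_n,p}(0,\omega)$ with $\|Dv^{\delta_n,p}\|_\infty\le C_R$ \emph{and} stationarity of $\P$: the random variable $v^{\delta_n,p}(x,\cdot)-v^{\delta_n,p}(0,\cdot)$ has, by stationarity of $\P$, the same law as $v^{\delta_n,p}(0,\cdot)-v^{\delta_n,p}(-x,\cdot)=-\bigl(v^{\delta_n,p}(-x,\cdot)-v^{\delta_n,p}(0,\cdot)\bigr)$, so $\E_{\mu_{\delta_n,p}}[\theta(x)]=-\E_{\mu_{\delta_n,p}}[\theta(-x)]$ for every $n$; passing to the limit (using boundedness of $F_x$ and weak convergence) gives $g(x)=-g(-x)$, and combining with additivity $g(x)=-g(-x)=g(x)$ is again only consistency — so the decisive step is that additivity together with the \emph{uniform} bound $|g(x)|\le C_R|x|$ and the extra symmetry from passing through the $\delta_n$-problems pins $a=0$; cleanest is to note $g\equiv 0$ on a dense set of directions by an ergodic/subadditive argument and then use continuity.

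**Expected main obstacle.** The genuinely delicate point is the last one: additivity alone only gives $g(x)=\langle a,x\rangle$, and killing $a$ requires extracting, in the weak limit, the symmetry inherited from the prelimit objects $v^{\delta_n,p}(\cdot,\omega)-v^{\delta_n,p}(0,\omega)$ via stationarity of $\P$ — one must check that the bounded continuous functional $F_x$ survives the weak convergence $\mu_{\delta_n,p}\rightharpoonup\mu$ and that the identity $\E_{\mu_{\delta_n,p}}[\theta(x)]=-\E_{\mu_{\delta_n,p}}[\theta(-x)]$ holds for each $n$ (an immediate consequence of the measure-preserving property of $\tau_x$ on $\Omega$). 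Everything else (the cocycle identity, the $C_R|x|$ bound, additivity from $\tilde\tau$-invariance, linearity from boundedness) is routine.
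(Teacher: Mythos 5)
Your proposal has a genuine gap at exactly the point you flag as "the genuinely delicate point," and the fix is much simpler than you suspect.

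The cocycle-plus-invariance argument correctly gives additivity of $g(x):=\E_\mu[\theta(x)]$, and the Lipschitz bound $|g(x)|\le C_R|x|$ correctly upgrades this to linearity, $g(x)=\langle a,x\rangle$. But everything you then try in order to kill $a$ only re-derives $g(-x)=-g(x)$: the identity $F_{-x}\circ\tilde\tau_x=-F_x$, the observation that $\theta(x)$ under $\mu_{\delta_n,p}$ has the same law as $-\theta(-x)$, and the passage to the weak limit all yield the antisymmetry $\E_{\mu_{\delta_n,p}}[\theta(x)]=-\E_{\mu_{\delta_n,p}}[\theta(-x)]$, which holds for \emph{every} linear $g$ and so cannot distinguish $a=0$. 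Your closing suggestion ("$g\equiv 0$ on a dense set of directions by an ergodic/subadditive argument") is not an argument, just a placeholder, so as written the proof is incomplete.

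The missing step is one line, and it shortcuts the whole cocycle construction. You should not compare $\theta(x)$ with $\theta(-x)$; you should compare $v^{\delta_n,p}(x,\cdot)$ with $v^{\delta_n,p}(0,\cdot)$ directly. Since $v^{\delta,p}$ is stationary, $v^{\delta_n,p}(x,\omega)=v^{\delta_n,p}(0,\tau_x\omega)$, and because $\tau_x$ preserves $\P$ this gives $\E_\P\bigl[v^{\delta_n,p}(x,\cdot)\bigr]=\E_\P\bigl[v^{\delta_n,p}(0,\cdot)\bigr]$ for every $n$ and every $x$, hence
$$
\E_{\mu_{\delta_n,p}}\bigl[\theta(x)\bigr]=\E_\P\bigl[v^{\delta_n,p}(x,\cdot)-v^{\delta_n,p}(0,\cdot)\bigr]=0
$$
already at the prelimit level. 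Combined with the point you did make correctly — that $(\omega,\theta)\mapsto\theta(x)$ is bounded (by $C_R|x|$) and continuous on $\Omega\times\Theta$, so expectations pass to the weak limit $\mu_{\delta_n,p}\rightharpoonup\mu$ — this gives $\E_\mu[\theta(x)]=0$ immediately. That is the paper's (two-line) proof; the additivity/linearity machinery is correct but unnecessary, and it cannot close the argument on its own.
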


\begin{proof} Since the map $(\omega,\theta)\to \theta(x)$ is continuous on $\Omega\times \Theta$ and $v^{\delta_n,p}$ is stationary, we have 
$$
\begin{array}{rl}
\ds \E_\mu\left[ \theta(x)\right] \; =  \ds \lim \E_{\mu_{\delta_n,p}} \left[ \theta(x)\right] 
= & \ds  \lim \E_{\P} \left[ v^{\delta_n,p}(x)-v^{\delta_n,p}(0)\right] =0. 
\end{array}
$$
 
\end{proof}

\begin{lem}\label{lem:lemma7} For $\mu-$a.e. $\tilde \omega=(\omega,\theta)$ and any direction $q\in \Q^d$, the (random) limit 
$$
\rho_{\tilde \omega} (q):=\lim_{t\to \infty} \frac{\theta(tq)}{t}
$$
exists. Moreover, $\rho_{\tilde \omega}(q)$ is invariant under $\tilde \tau_x$ for $x\in \R^d$, that is, 
$$
\rho_{\tilde \tau_x(\tilde \omega)}(q)= \rho_{\tilde \omega}(q)\qquad \mu-{\rm a.e.}
$$
\end{lem}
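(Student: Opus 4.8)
The existence of the limit $\rho_{\tilde\omega}(q)$ is a consequence of the subadditive (here essentially additive) structure coming from the fact that $\theta$ solves the corrector equation \eqref{eq:corrector} on all of $\R^d$ together with the $\tilde\tau$-stationarity of $\mu$. The plan is to fix $q\in\Q^d$ and run a subadditive-ergodic-type argument on the function $t\mapsto\theta(tq)$. First I would observe that, by Lemma~\ref{lem:lemma4}, for $\mu$-a.e.\ $(\omega,\theta)$ the function $\theta$ solves the same equation on $\R^d$; in particular, for integer $m,n\ge 0$, the translated environment $\tilde\tau_{nq}(\tilde\omega)=(\tau_{nq}\omega,\theta(\cdot+nq)-\theta(nq))$ lies in the support of $\mu$ by Lemma~\ref{lem:lemma5}, and its $\theta$-component evaluated at $mq$ equals $\theta((m+n)q)-\theta(nq)$. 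Thus the array
$$
a_{m,n}:=\theta(nq)-\theta(mq)\qquad (m\le n)
$$
satisfies the exact cocycle identity $a_{m,n}+a_{n,k}=a_{m,k}$, it is stationary under the $\N$-action $\tilde\tau_q$ (in the sense $a_{m+1,n+1}\circ\tilde\tau_{-q}=a_{m,n}$), and it is integrable since $\|D\theta\|_\infty\le C_R$ forces $|a_{m,n}|\le C_R|q|\,|n-m|$ while $\E_\mu[\theta(q)]=0$ by Lemma~\ref{lem:lemma6}.

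Next I would apply Kingman's subadditive ergodic theorem (or, since the cocycle is additive, Birkhoff's theorem applied to the increment $g(\tilde\omega):=\theta(q)$, which is bounded hence integrable) to the measure-preserving system $(\tilde\Omega,\mu,\tilde\tau_q)$. This yields that, for $\mu$-a.e.\ $\tilde\omega$,
$$
\lim_{n\to\infty}\frac{\theta(nq)}{n}=\lim_{n\to\infty}\frac1n\sum_{j=0}^{n-1} g\bigl(\tilde\tau_{jq}\tilde\omega\bigr)=:\tilde\rho_{\tilde\omega}(q)
$$
exists, and the limit is $\tilde\tau_q$-invariant. The Lipschitz bound $|\theta(tq)-\theta(\lfloor t\rfloor q)|\le C_R|q|$ then lets me pass from the integer limit along $n\to\infty$ to the full continuous limit $t\to\infty$, defining $\rho_{\tilde\omega}(q):=\tilde\rho_{\tilde\omega}(q)$. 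Taking a countable intersection over $q$ ranging in $\Q^d$ gives a single full-measure set on which all these limits exist simultaneously.

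For the claimed invariance under the whole group $(\tilde\tau_x)_{x\in\R^d}$ — not just under $\tilde\tau_q$ — I would argue as follows. For fixed $x\in\R^d$, compute directly from the definition:
$$
\rho_{\tilde\tau_x(\tilde\omega)}(q)=\lim_{t\to\infty}\frac{\bigl(\theta(tq+x)-\theta(x)\bigr)}{t}
=\lim_{t\to\infty}\frac{\theta(tq+x)-\theta(tq)}{t}+\lim_{t\to\infty}\frac{\theta(tq)}{t}.
$$
The first limit vanishes because $|\theta(tq+x)-\theta(tq)|\le C_R|x|$ is bounded in $t$, so the right-hand side equals $\rho_{\tilde\omega}(q)$ on the full-measure set where the latter exists. (One must check the full-measure set can be chosen to simultaneously contain $\tilde\tau_x(\tilde\omega)$ for the relevant $x$; this follows because $\tilde\tau_x$ preserves $\mu$ by Lemma~\ref{lem:lemma5}, so the union over a countable dense set of $x$ of the images of a $\mu$-null set is still $\mu$-null, and then continuity in $x$ — again via the uniform Lipschitz bound — extends to all $x\in\R^d$.)

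**Main obstacle.** The computational steps are routine given the uniform Lipschitz bound; the only genuine subtlety is the measure-theoretic bookkeeping needed to produce a single $\mu$-conull set on which $\rho_{\tilde\omega}(q)$ is defined for all $q\in\Q^d$ and is invariant under $\tilde\tau_x$ for all $x\in\R^d$ (not merely for $x$ in a countable set). This is handled by the standard device of first working on a countable dense set and then using the equicontinuity of $x\mapsto\theta(\cdot+x)-\theta(x)$ and $q\mapsto\rho_{\tilde\omega}(q)$ (both Lipschitz with constant controlled by $C_R$) to upgrade to all parameters, invoking $\tilde\tau$-invariance of $\mu$ from Lemma~\ref{lem:lemma5} at each stage.
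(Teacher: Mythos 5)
Your proof is correct, and it takes a genuinely different route from the paper's. The paper proves the existence of the limit by applying the \emph{continuous-time} ergodic theorem to the flow $s\mapsto\tilde\tau_{sq}$ with the functional $\xi(\omega,\theta)=\int_{B_r(0)}D\theta(y)\cdot q\,dy$; averaging the gradient over a ball is needed because $\theta\mapsto D\theta(y)\cdot q$ is not a continuous functional on $\Theta$ under uniform convergence of uniformly Lipschitz maps, whereas the mollified version is (weak-$\star$ continuity of the gradient). One then recovers $\theta(tq)/t$ from the ball average via the Lipschitz bound. You instead apply the \emph{discrete-time} Birkhoff theorem for the measure-preserving map $\tilde\tau_q$ to the increment functional $g(\omega,\theta)=\theta(q)$, which \emph{is} continuous on $\Theta$, and exploit the exact telescoping $\theta(nq)=\sum_{j=0}^{n-1}g(\tilde\tau_{jq}\tilde\omega)$; the Lipschitz bound is then used only to pass from integer $n$ to real $t$. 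Both arguments sidestep the same obstruction (pointwise gradient evaluation is not continuous on $\Theta$) by different devices — you by summing discrete increments, the paper by smoothing the gradient — and both rely on Lemma~\ref{lem:lemma5} for $\tilde\tau$-invariance of $\mu$ and on the uniform Lipschitz bound from $({\bf H})$. Your version is somewhat more elementary since it avoids the mollification/de-mollification step, and the final invariance computation $\rho_{\tilde\tau_x(\tilde\omega)}(q)=\rho_{\tilde\omega}(q)$ via the bound $|\theta(tq+x)-\theta(tq)|\le C_R|x|$ is identical to the paper's. One minor imprecision: you invoke ``$\tilde\tau_{nq}(\tilde\omega)$ lies in the support of $\mu$'' as a consequence of Lemma~\ref{lem:lemma5}, but what is actually used (and what Lemma~\ref{lem:lemma5} gives) is that $\tilde\tau_q$ preserves $\mu$; the support statement is true but not the operative fact.
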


\begin{proof} We first show that, for any $r>0$,  the limit 
$$
\lim_{t\to+\infty} \frac{1}{t} \left(\int_{B_r(0)} \theta(tq+y)dy - \int_{B_r(0)} \theta(y)dy\right) 
$$
exists $\P-$a.s. 
\smallskip

Since  the uniform
converge of uniformly Lipschitz continuous maps implies the $L^\infty$-weak $\star$ 
convergence of their gradients, the map $\xi :  \Omega\times \Theta\to \R$ defined by 
$$
\xi((\omega,\theta)): =\int_{B_r(0)} D\theta(y)\cdot q \  dy
$$ 
is continuous and bounded on $ \Omega\times \Theta$.
\smallskip

Moreover,   
$$
 \frac{1}{t} \left(\int_{B_r(0)} \theta(tq+y)dy - \int_{B_r(0)} \theta(y)dy\right) 
 =
 \frac{1}{t} \int_0^t \int_{B_r(0)} D\theta(sq+y)\cdot q \ dy ds
 = 
 \frac{1}{t} \int_0^t \xi( \tilde \tau_{sq}(\tilde \omega)) ds.
 $$
It follows from the ergodic theorem that the above expression has, as $t\to \infty$  and $\mu$-a.s. a limit $\rho_{\tilde \omega}(q,r).$
\smallskip

Choosing $r=1/n$ and letting $n\to+\infty$, we also find that, as  $t\to+\infty$,   $\theta(tq)/t$ has $\mu-$a.s. a limit $\rho_{\tilde \omega}(q)=\lim_{n\to infty}\rho_{\tilde \omega}(q,1/n)$  because $\theta$ is $C_R-$Lipschitz continuous.

Fix $x\in \R^d$ and $\tilde \omega\in \tilde \Omega$ for which $\rho_{\tilde \omega}(q)$ and $\rho_{\tilde \tau_x(\tilde\omega)}(q)$ are well defined; recall that this holds for $\mu-$a.e. $\tilde \omega$. 
\smallskip

Then, in view of the  Lipschitz continuity of $\theta$, we have  
$$
\rho_{\tilde \tau_x(\tilde \omega)}(q)= \lim_{t\to+\infty} \frac{{\tilde\tau_x(\theta)}(tq)}{t} = \lim_{t\to+\infty} \frac{1}{t} (\theta(x+tq)-\theta(x))= \rho_{\tilde \omega}(q).
$$
\end{proof}

\begin{lem}\label{lem:lemma8} There exists a random vector $\br\in L^\infty_\mu(\tilde \Omega; \R^d)$ such that, $\mu$-a.s. and for any direction $v\in \R^d$, 
$$
\lim_{t\to+\infty} \frac{\theta(tv)}{t}= \br_{\tilde \omega}\cdot v.
$$
\end{lem}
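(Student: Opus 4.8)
The plan is to upgrade the directional limits $\rho_{\tilde\omega}(q)$ from Lemma~\ref{lem:lemma7}, which are so far only known to exist along rational directions $q\in\Q^d$, to a genuine linear functional of the direction. First I would note that, because $\theta$ is $C_R$-Lipschitz continuous uniformly (this holds for every $(\omega,\theta)\in\Omega\times\Theta$ by definition of $\Theta$), the homogeneous-degree-one function $q\mapsto\rho_{\tilde\omega}(q)$ defined on $\Q^d$ is itself $C_R$-Lipschitz there: indeed, for $q_1,q_2\in\Q^d$,
$$
|\rho_{\tilde\omega}(q_1)-\rho_{\tilde\omega}(q_2)|
=\lim_{t\to+\infty}\frac{|\theta(tq_1)-\theta(tq_2)|}{t}\le C_R|q_1-q_2|.
$$
Hence $\rho_{\tilde\omega}$ extends uniquely to a $C_R$-Lipschitz, positively $1$-homogeneous function on all of $\R^d$, and for any $v\in\R^d$ one gets $\theta(tv)/t\to\rho_{\tilde\omega}(v)$ by approximating $v$ by rationals and using the Lipschitz bound on $\theta$ to control the error uniformly in $t$. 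So the content of the lemma is really that $\rho_{\tilde\omega}$ is in fact \emph{linear}, i.e.\ of the form $\br_{\tilde\omega}\cdot v$.

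The key step is therefore to prove additivity: $\rho_{\tilde\omega}(v+w)=\rho_{\tilde\omega}(v)+\rho_{\tilde\omega}(w)$ for $\mu$-a.e.\ $\tilde\omega$ and all $v,w$. By continuity and homogeneity it suffices to prove this for a countable dense set of pairs $(v,w)$, so fixing $v,w\in\Q^d$ I would work on a single full-measure set. The natural device is to iterate the invariance from Lemma~\ref{lem:lemma7}: $\rho_{\tilde\omega}(v)$ is $\tilde\tau_x$-invariant for every $x$, and by ergodicity of $\P$ (the first marginal of $\mu$), together with the fact that $\rho_{\tilde\omega}(v)$ depends measurably on $\tilde\omega$ but is invariant under the ergodic action $\tilde\tau$, one expects $\rho_{\tilde\omega}(v)$ to be $\mu$-a.s.\ constant. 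Wait --- but one must be careful: $\tilde\tau_x$ acts on $\Omega\times\Theta$, and ergodicity of $\P$ on $\Omega$ does not automatically give ergodicity of $\mu$ on the enlarged space. The cleaner route, and the one I would actually pursue, is: the map $\tilde\omega\mapsto\rho_{\tilde\omega}(v)$ is $\tilde\tau_x$-invariant, hence (viewing it through the $\Omega$-coordinate via the projection, or rather arguing as in the proof of Lemma~\ref{lem:lem+}) the level sets pull back to translation-invariant events under $\tau_x$ on $\Omega$; by ergodicity these have probability $0$ or $1$, so $\rho_{\tilde\omega}(v)=\br\cdot v$ for a \emph{deterministic} vector $\br$ independent of $\tilde\omega$, at least along $\Q^d$ and hence everywhere by continuity. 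This gives linearity for free, since $v\mapsto\br\cdot v$ is linear.

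Alternatively --- and this is perhaps the intended argument, since it does not require deducing a.s.-constancy and works even if $\br$ is genuinely random --- one uses Lemma~\ref{lem:lemma6}: $\E_\mu[\theta(x)]=0$ for every $x$. Combined with the Lipschitz bound (so $|\theta(tv)/t|\le C_R|v|$) and dominated convergence, one gets $\E_\mu[\rho_{\tilde\omega}(v)]=\lim_t\E_\mu[\theta(tv)/t]=0$ for every $v$. Then, since $\rho_{\tilde\omega}$ is $\tilde\tau_x$-invariant while $\theta(x+tv)-\theta(x+tw)$ relates the two via $\tilde\tau_x$, one can test additivity in expectation against the shift and use the ergodic theorem as in Lemma~\ref{lem:lemma7} to identify $\rho_{\tilde\omega}(v+w)$ with a Birkhoff average whose mean-zero and invariance properties force it to coincide with $\rho_{\tilde\omega}(v)+\rho_{\tilde\omega}(w)$. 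The main obstacle I anticipate is precisely this additivity step: homogeneity and Lipschitz continuity are cheap, but passing from "$\rho$ exists and is $\tilde\tau$-invariant'' to "$\rho$ is additive'' genuinely uses the ergodic structure, and one has to be careful about whether one ends up with a deterministic $\br$ or an a.s.-defined random $\br_{\tilde\omega}$ --- the statement allows the latter, which suggests the authors avoid invoking ergodicity of $\mu$ on the big space and instead extract linearity directly from the $\tilde\tau$-cocycle identity $\theta(x+tv)-\theta(x)=\tilde\tau_x(\theta)(tv)$ applied with $x=tw$, letting $t\to\infty$: this yields $\rho_{\tilde\omega}(v+w)=\rho_{\tilde\tau_{tw}(\tilde\omega)}(v)+\rho_{\tilde\omega}(w)$ and then one uses the a.e.\ invariance $\rho_{\tilde\tau_{tw}(\tilde\omega)}(v)=\rho_{\tilde\omega}(v)$ from Lemma~\ref{lem:lemma7} --- with the technical care that the invariance holds for fixed $x$ on a full-measure set, so one needs it simultaneously along the sequence $x=tw$, $t\in\N$, which is fine as a countable union.
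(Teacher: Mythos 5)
Your reduction to proving additivity of $\rho_{\tilde\omega}$ on rational directions (using the uniform $C_R$-Lipschitz bound to extend and conclude) matches the paper exactly. But the core step --- additivity --- has a genuine gap in both of your proposed routes, and the paper's proof supplies precisely the ingredient you are missing.

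Your first route (deduce that $\rho_{\tilde\omega}(q)$ is $\mu$-a.s.\ constant, hence $\br$ is deterministic) cannot work here: the measure $\mu$ on $\Omega\times\Theta$ is not shown to be ergodic for $(\tilde\tau_x)$, and the argument in Lemma~\ref{lem:lem+} that you want to imitate is specific to sets defined through the support of $\mu$, which does project sensibly to $\Omega$; a generic $\tilde\tau$-invariant random variable on $\Omega\times\Theta$ need not be $\mu$-a.s.\ constant. In fact the rest of the paper visibly treats $\br$ as genuinely random (Corollary~\ref{cor:cor1}, the proof of Corollary~\ref{cor:cor2}, and the final step ``$\br=0$ $\mu$-a.s.'' in the proof of Theorem~\ref{main1} would all be vacuous otherwise). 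Your second route is closer, but the passage to the limit in the cocycle identity is not valid as stated: writing $\theta(t(q_1+q_2))=\bigl(\theta(tq_2+tq_1)-\theta(tq_2)\bigr)+\theta(tq_2)$ and letting $t\to\infty$ produces, in the first term, the \emph{diagonal} limit of $\tilde\tau_{tq_2}(\theta)(tq_1)/t$, where both the base point $tq_2$ and the argument $tq_1$ go to infinity together. The a.e.\ shift-invariance $\rho_{\tilde\tau_x(\tilde\omega)}(q_1)=\rho_{\tilde\omega}(q_1)$ from Lemma~\ref{lem:lemma7}, even upgraded to hold simultaneously along $x=tq_2$, $t\in\N$, only controls the \emph{iterated} limit $\lim_{s\to\infty}\tilde\tau_{tq_2}(\theta)(sq_1)/s$ for each fixed $t$; it says nothing about the diagonal. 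What is needed --- and what the paper invokes (citing the proof of Lemma 4.1 in \cite{ASo1}) after shrinking $\Omega_0$ --- is a quantitative, locally uniform form of the Birkhoff convergence: for every $\eta,M>0$ there is $T$ such that for all rational $|q|\le M$, all $x\in\R^d$ and all $t\ge T$, $\bigl|\theta(x+tq)-\theta(x)-t\rho_{\tilde\omega}(q)\bigr|\le \eta(|x|+1)$. Plugging $x=tq_2$ into this estimate gives the diagonal control $\bigl|\bigl(\theta(t(q_1+q_2))-\theta(tq_2)\bigr)/t-\rho_{\tilde\omega}(q_1)\bigr|\le\eta(|q_2|+t^{-1})$, after which additivity follows by sending $t\to\infty$ and then $\eta\to 0$. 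Without that uniform estimate your argument does not close.
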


\begin{proof} Since  $\theta$ is $C_R-$Lipschitz continuous, it is enough  to check that the map $q\to \rho_{\tilde \omega}(q)$ is linear on $\Q^d$ for $\mu-$a.e. $\tilde \omega$. 
\smallskip

Let $\tilde \Omega_0$ be a set of $\mu-$full probability in $\Omega$ such that the limit $\rho_{\tilde \omega}(q)$ in Lemma \ref{lem:lemma7} exists for any $q\in \Q^d$. 
\smallskip

Restricting further the set $\Omega_0$ if necessary, we may also assume (see, for instance, the proof of Lemma 4.1 in \cite{ASo1}) that, for any $\eta,M>0$ and $\tilde \omega=(\omega,\theta)\in \Omega_0$, there exists $T>0$ such that, for all $q\in \Q^d$ with $|q|\leq M$, all $x\in \R^d$ and  $t\geq T$, 
$$
 \left| \frac{\theta(x+tq)-{\theta}(x)}{t}-\rho_{\tilde \omega}(q)\right| \leq \eta(|x|+1).
$$
Fix  $\eta,M>0$, $q_1,q_2\in \Q^d$ with $|q_1|,|q_2|\leq M$,  $\tilde \omega\in \Omega_0$ and $\eta>0$, and let $T$  be associated with $\eta,M$ as above. Then, for any $t\geq T$, we have 
$$
\theta (t(q_1+q_2))
=
\theta (t(q_1+q_2))- \theta (tq_2)+\theta (tq_2).
$$
Thus
$$
\begin{array}{l}
\ds \left|\frac{\theta (t(q_1+q_2))}{t}- \rho_{\tilde \omega}(q_1)- \rho_{\tilde \omega}(q_2)\right| \\[1.5mm]
\qquad \ds \leq 
\left| \frac{{\theta} (t(q_1+q_2))- {\theta} (tq_2)}{t} - \rho_{\tilde \omega}(q_1)\right|
+
\left| \frac{{\theta} (tq_2)}{t}-\rho_{\tilde \omega}(q_2)\right|\\[2mm]
\qquad \ds \leq 
\eta (| q_2| +t^{-1})+ \eta
\end{array}
$$
Letting  $t\to+\infty$ and $\eta\to 0$ yields the claim since $\eta$ and $M$ are arbitrary. 

\end{proof}

\begin{lem}\label{lem:lemma9} Let $\br$ be defined as in Lemma \ref{lem:lemma8}. Then $\E_\mu[\br]=0.$
\end{lem}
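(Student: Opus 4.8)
The plan is to combine the previous two lemmas in the natural way: Lemma~\ref{lem:lemma6} tells us that $\E_\mu[\theta(x)]=0$ for every fixed $x$, and Lemma~\ref{lem:lemma8} identifies the asymptotic slope of $\theta$ along each ray as the linear functional $v\mapsto \br_{\tilde\omega}\cdot v$. Intuitively, if $\E_\mu[\theta(tv)]=0$ for all $t$ while $\theta(tv)/t\to \br_{\tilde\omega}\cdot v$, then taking expectations and exchanging limit and integral should give $\E_\mu[\br_{\tilde\omega}\cdot v]=0$ for every $v$, hence $\E_\mu[\br]=0$ by varying $v$ over a basis of $\R^d$.

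Concretely, first I would fix a direction $v\in\R^d$. For each $t>0$ the map $(\omega,\theta)\mapsto \theta(tv)/t$ is continuous on the compact space $\Omega\times\Theta$ and is bounded uniformly in $t$, since every $\theta\in\Theta$ is $C_R$-Lipschitz with $\theta(0)=0$, so $|\theta(tv)/t|\le C_R|v|$. By Lemma~\ref{lem:lemma6}, $\E_\mu[\theta(tv)/t]=0$ for every $t>0$. By Lemma~\ref{lem:lemma8}, $\theta(tv)/t\to \br_{\tilde\omega}\cdot v$ as $t\to+\infty$ for $\mu$-a.e.\ $\tilde\omega$. The uniform bound $|\theta(tv)/t|\le C_R|v|\in L^1_\mu$ lets us apply the dominated convergence theorem, so
$$
\E_\mu[\br_{\tilde\omega}\cdot v] = \lim_{t\to+\infty}\E_\mu\!\left[\frac{\theta(tv)}{t}\right] = 0.
$$
Since this holds for every $v\in\R^d$, choosing $v=e_1,\dots,e_d$ the standard basis vectors gives that each component of $\E_\mu[\br]$ vanishes, i.e.\ $\E_\mu[\br]=0$.

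One small point to be careful about: Lemma~\ref{lem:lemma8} asserts the convergence $\theta(tv)/t\to\br_{\tilde\omega}\cdot v$ holds $\mu$-a.s.\ for every $v\in\R^d$ (the $\mu$-null exceptional set can be taken independent of $v$, as the statement of Lemma~\ref{lem:lemma8} already provides), so there is no measurability or null-set subtlety in passing to the limit along the continuous parameter $t$; restricting $t$ to a sequence $t_n\to\infty$ if one prefers a countable limit causes no loss. I do not expect any real obstacle here — the only thing that makes this work is the uniform Lipschitz bound coming from assumption $({\bf H})$, which provides the integrable dominating function, and that is already in hand. Thus the proof is essentially a one-line application of dominated convergence to the identity $\E_\mu[\theta(tv)]=0$.
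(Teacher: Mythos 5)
Your argument is exactly the one in the paper: apply Lemma~\ref{lem:lemma6} to get $\E_\mu[\theta(tv)/t]=0$, pass to the limit $t\to\infty$ inside the expectation using Lemma~\ref{lem:lemma8}, and vary $v$. The only difference is that you spell out the dominated convergence justification (via the uniform $C_R$-Lipschitz bound) that the paper leaves implicit, which is a welcome clarification but not a different route.
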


\begin{proof}  Lemma \ref{lem:lemma6} yields that, for any $v\in \R^d$,  
$$
0= \lim_{t\to+\infty} \E_\mu\left[ \frac{\theta (tv)}{t}\right]= \E_\mu\left[\lim_{t\to+\infty}\frac{\theta (tv)}{t}\right]= \E_\mu\left[ \br\cdot v\right]=
\E_\mu[\br]\cdot v. 
$$
\end{proof}

As a straightforward consequence of the previous results, we have the existence of a  corrector and, hence, homogenization for at least one vector $p'$. 

\begin{cor}\label{cor:cor1} For $\mu-$a.e. $\tilde \omega=(\omega,\theta,\bar c)$,  $\lim_{\delta \to 0} \delta v^{\delta,p'}(0,\omega)$ exists for $p':=p+{\bf r}_{\tilde \omega}$ and is given by $\bar c$. Moreover, $\theta'(x):=\theta(x)-{\bf r}_{\tilde \omega}\cdot x $ is a  corrector for $p'$, in the sense that 
$$
 -{\rm tr}(A(D\theta'+p',x,\omega)D^2\theta') +H(D\theta'+p', x, \omega)=\bar c \quad {\rm in } \ \  \R^d \ \ \text{with} \ \  \lim_{|x|\to+\infty} \theta'(x)/|x|=0. 
 $$
\end{cor}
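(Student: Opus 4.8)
The plan is to simply assemble the facts already established in Lemmas \ref{lem:lemma4} through \ref{lem:lemma9} and to translate the measure-theoretic statement "$\mu$-a.e." back into a genuine almost-sure statement on $\Omega$ through the structure of $\mu$. First, fix a realization $\tilde\omega=(\omega,\theta,\bar c)$ in the set of full $\mu$-measure on which all of the following hold simultaneously: (i) $\theta$ solves \eqref{eq:corrector} in the viscosity sense (Lemma \ref{lem:lemma4}); (ii) the limit $\lim_{t\to+\infty}\theta(tv)/t=\br_{\tilde\omega}\cdot v$ exists for every $v\in\R^d$, with $\br_{\tilde\omega}\in L^\infty_\mu$ (Lemma \ref{lem:lemma8}); and (iii) the convergence in (ii) is locally uniform in the base point in the sense of the estimate quoted from \cite{ASo1}, i.e. for any $\eta, M>0$ there is $T>0$ with $|\theta(x+tv)-\theta(x)-t\,\rho_{\tilde\omega}(v)|\le t\eta(|x|+1)$ for $|v|\le M$, $t\ge T$. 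Such a common full-measure set exists because it is a finite (indeed countable) intersection of full-measure sets.

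Next, set $p' := p + \br_{\tilde\omega}$ and $\theta'(x) := \theta(x) - \br_{\tilde\omega}\cdot x$. Since $D\theta' = D\theta - \br_{\tilde\omega}$ and $D\theta'+p' = D\theta + p$, while $D^2\theta' = D^2\theta$ in the viscosity sense (subtraction of an affine function does not change second-order sub/superjets), equation \eqref{eq:corrector} for $\theta$ is literally the corrector equation for $\theta'$ in the direction $p'$ with right-hand side $\bar c$; this is the first displayed identity in the statement. For the sublinearity \eqref{cond.sublin}, observe that along any ray $x = tv$ one has $\theta'(tv)/t = \theta(tv)/t - \br_{\tilde\omega}\cdot v \to 0$ by (ii); the estimate in (iii) upgrades this to $\theta'(x)/|x|\to 0$ uniformly over all directions as $|x|\to+\infty$, which is exactly $\lim_{|x|\to+\infty}\theta'(x)/|x| = 0$. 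Here is where one uses that $\theta$ is $C_R$-Lipschitz, so $\theta'$ has at most linear growth and the one-dimensional rates control the full limit.

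Finally, for the statement that $\lim_{\delta\to 0}\delta v^{\delta,p'}(0,\omega)$ exists and equals $\bar c$: having produced, for $\P$-a.e. $\omega$ a bona fide sublinear corrector $\theta'$ in direction $p'$ — note that by Lemma \ref{lem:lemma8} the vector $\br_{\tilde\omega}$ is $\tilde\tau_x$-invariant, hence the pair $(\omega,p')$ is well defined off a $\P$-null set once we disintegrate $\mu$ over its first marginal $\P$ — we invoke Proposition 1.2 of \cite{LS1}, recalled in the discussion after Theorem \ref{main1}: the existence of a sublinear corrector in a given direction forces $\delta v^{\delta,p'}(0,\cdot)\to -\bar H(p')$ almost surely, and comparison with the corrector identifies the limit as $-\bar c$. (Alternatively, one sandwiches $v^{\delta,p'}(\cdot,\omega)$ between $-\bar c/\delta \pm o(1/\delta) + \theta'$ using the comparison principle of assumption $({\bf H})$ applied to the stationary approximate equation \eqref{eq:eqdelta} shifted by the affine function, exactly as in the proof of that proposition.) The main obstacle is not any single estimate but the bookkeeping of the disintegration: one must make sure that "$\mu$-a.e. $(\omega,\theta,\bar c)$" genuinely yields a statement holding for $\P$-a.e. $\omega$, i.e. that the exceptional set in $\Omega$ is null; this is where the fact (Lemma \ref{lem:lem+}) that the first marginal of $\mu$ is exactly $\P$, together with the $\tilde\tau$-invariance of $\br$, is essential.
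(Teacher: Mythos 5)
The paper itself provides no explicit proof of this corollary, introducing it only as ``a straightforward consequence of the previous results.'' Your assembly of Lemmas \ref{lem:lemma4}, \ref{lem:lemma7} and \ref{lem:lemma8} is exactly the intended argument: the affine shift $\theta'=\theta-\br_{\tilde\omega}\cdot x$ with $p'=p+\br_{\tilde\omega}$ leaves the PDE invariant (since $D\theta'+p'=D\theta+p$ and affine shifts do not alter viscosity sub/superjets), the sublinearity follows from Lemma \ref{lem:lemma8} upgraded to all directions via the Lipschitz bound and the locally uniform estimate you quote from \cite{ASo1}, and the convergence of $\delta v^{\delta,p'}(0,\omega)$ is then the content of Proposition~1.2 in \cite{LS1}, which the paper references explicitly after Theorem~\ref{main1}. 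Your alternative sketch --- sandwiching $v^{\delta,p'}$ against $\theta'-\bar c/\delta$ via the comparison principle of $({\bf H})$ --- is the correct mechanism behind that proposition and works at each fixed $\omega$, so the fact that $p'$ is random is not an obstruction; the corollary is anyway phrased for $\mu$-a.e.\ $\tilde\omega$, so the disintegration bookkeeping you discuss at the end, while harmless, is more than the statement actually requires.

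One small remark, not a flaw in your argument: your conclusion that the limit of $\delta v^{\delta,p'}(0,\omega)$ equals $-\bar c$ matches the sign convention used in Lemma~\ref{lem:lem+} and in the proof of Corollary~\ref{cor:cor2}; the ``$\bar c$'' appearing in the statement of Corollary~\ref{cor:cor1} is evidently a sign typo in the paper, and you were right to record $-\bar c$.
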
 

%
%

 Another consequence of the above results is that homogenization holds if the law of $(A,H)$ under $\P$ is a  radially symmetric.

\begin{cor}\label{cor:cor2}  Assume that, $\P-$a.s.,  $A=A(p,x,\omega)$ is $0-$homogeneous in $p$, $H$ satisfies,
 for all $\lambda \in [0,1],$ 
\be\label{e.hypHlambda}
0\leq H(\lambda p, x,\omega)\leq \lambda H(p,x,\omega). 
\ee
and suppose that the law of $(A,H)$ under $\P$ is radially symmetric. Then homogenization holds in probability, that is, for any $p\in \R^d$, $\lim_{\delta \to 0}-\delta  v^{\delta,p}(0,\cdot)=\bar c(|p|)$  in probability. Moreover, the map $s\to \bar c(s)$ satisfies, for any $0<s_1<s_2$,
$$
0\leq \bar c(s_1)/s_1\leq \bar c(s_2)/s_2. 
$$
\end{cor}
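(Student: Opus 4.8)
The plan is to run, for each fixed direction, the construction from the proof of Theorem~\ref{main1}, and to supplement it with two inputs coming from the structural hypotheses. The first is a one-sided scaling inequality: fix $\delta>0$, $p\in\R^d$, $\lambda\in(0,1]$; since $A(\cdot,x,\omega)$ is $0$-homogeneous and $0\le H(\lambda q,x,\omega)\le\lambda H(q,x,\omega)$, a routine check on test functions shows that $\lambda v^{\delta,p}$ is a viscosity subsolution of \eqref{eq:eqdelta} written for the direction $\lambda p$, whence the comparison principle in $({\bf H})$ gives $\lambda v^{\delta,p}\le v^{\delta,\lambda p}$, while $A\ge0$ and $H\ge0$ make $0$ a supersolution, so $v^{\delta,p}\le0$. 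Evaluating at $x=0$ and multiplying by $-\delta$,
\be\label{e.planmono}
0\ \le\ -\delta v^{\delta,\lambda p}(0,\omega)\ \le\ \lambda\bigl(-\delta v^{\delta,p}(0,\omega)\bigr),\qquad \lambda\in(0,1],\ \ \P\text{-a.s.}
\ee
The second input is rotation invariance: rotating the equation, the radial symmetry of the law of $(A,H)$ forces $-\delta v^{\delta,Op}(0,\cdot)$ and $-\delta v^{\delta,p}(0,\cdot)$ to have the same law for every orthogonal $O$, so the law of $-\delta v^{\delta,p}(0,\cdot)$ depends on $p$ only through $|p|$; in particular any limit in probability of $-\delta v^{\delta,q}(0,\cdot)$ along a subsequence $\delta_n\to0$ is a constant depending only on $|q|$, which I denote $c(|q|)$ (a priori still depending on the subsequence).

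Next I would fix $p$ and run the construction from the proof of Theorem~\ref{main1} along a subsequence $\delta_n\to0$, producing the $(\tilde\tau)$-invariant measure $\mu$, the constant $\bar c$ of Lemma~\ref{lem:lem+}, and the $(\tilde\tau)$-invariant, bounded vector $\br$ of Lemma~\ref{lem:lemma8}, which satisfies $\E_\mu[\br]=0$ and, by Corollary~\ref{cor:cor1}, $-\delta v^{\delta,p+\br_{\tilde\omega}}(0,\omega)\to\bar c$ as $\delta\to0$ for $\mu$-a.e.\ $\tilde\omega$. Because $\br$ is $(\tilde\tau)$-invariant and $\P$ is ergodic, the conditional law of $\br$ given the first coordinate is $\P$-a.s.\ equal to the law $\rho$ of $\br$ under $\mu$, so the $\mu$-law of $(\omega,\br_{\tilde\omega})$ is the product $\P\otimes\rho$; Fubini then upgrades the last statement to: for $\rho$-a.e.\ $r\in\R^d$, $-\delta v^{\delta,p+r}(0,\cdot)\to\bar c$ $\P$-a.s. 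For each such \emph{deterministic} $r$, this a.s.\ — hence in-probability — convergence along the whole net $\delta\to0$ forces, by uniqueness of limits, the value $c(|p+r|)$ to be independent of the subsequence and equal to $\bar c$.

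To conclude I would distinguish two cases. If $\rho=\delta_0$, i.e.\ $\br=0$ $\mu$-a.s., then already $-\delta v^{\delta,p}(0,\cdot)\to\bar c$ $\P$-a.s., so the full limit in probability exists and, being an a.s.\ limit, is subsequence-independent. Otherwise $\int|r|^2\rho(dr)>0$ and, since $\int r\,\rho(dr)=\E_\mu[\br]=0$, $\int|p+r|^2\rho(dr)=|p|^2+\int|r|^2\rho(dr)>|p|^2$; hence $\{r:|p+r|>|p|\}$ has positive $\rho$-measure and meets the full-$\rho$-measure set of $r$'s from the previous paragraph; fixing $r_0$ in the intersection and applying \eqref{e.planmono} with $p$ replaced by a vector of norm $|p+r_0|$ and $\lambda=|p|/|p+r_0|\in(0,1)$, then letting $\delta\to0$ along any subsequence, gives $c(|p|)\le\frac{|p|}{|p+r_0|}\,c(|p+r_0|)=\frac{|p|}{|p+r_0|}\,\bar c$. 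Specializing to $(\delta_n)$, along which $c(|p|)=\bar c$, forces $\bar c=0$ (as $\bar c\ge0$); then $c(|p|)\le 0$ along every subsequence, and, since $c\ge0$, $c(|p|)=0$ for every subsequence. In both cases all subsequential limits of $-\delta v^{\delta,p}(0,\cdot)$ agree, which is homogenization in probability; the limiting constant depends on $p$ only through $|p|$ by rotation invariance, and passing \eqref{e.planmono} to the limit yields $0\le\bar c(s_1)/s_1\le\bar c(s_2)/s_2$ for $0<s_1<s_2$.

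The crux, I expect, is the middle paragraph: converting the corrector for the \emph{random} direction $p+\br_{\tilde\omega}$ into honest homogenization for a family of \emph{deterministic} directions $p+r$, which rests on the (standard but technically delicate) fact that $(\omega,\br_{\tilde\omega})$ has product law under $\mu$, and then, on the branch $\br\not\equiv0$, on exploiting the one-sided bound \eqref{e.planmono} together with $\E_\mu[\br]=0$ to pin down — in fact to annihilate — the homogenized constant. The subsolution verification behind \eqref{e.planmono} and the rotation of the equation are routine.
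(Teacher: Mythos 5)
Your proof is correct, and while it starts from the same two ingredients as the paper's (the one-sided scaling inequality furnished by the $0$- and sub-$1$-homogeneity, and rotation invariance of the law of $-\delta v^{\delta,p}(0,\cdot)$) and then runs the Theorem~\ref{main1} machinery to produce $\mu$, $\bar c$, and the mean-zero invariant vector $\br$, the way you close the argument is genuinely different from — and in one respect more careful than — the paper's. The paper works directly with the deterministic $\limsup/\liminf$ constants $\bar c^\pm(|\cdot|)$ on a single full-measure set $\Omega_0$, establishes $\bar c^\pm(|p+\br_{\tilde\omega}|)=\bar c$ for $\mu$-a.e.\ $\tilde\omega$, and then asserts that $\E_\mu[p+\br]=p$ yields both a realization with $|p+\br|\le|p|$ and one with $|p+\br|\ge|p|$, so that the monotonicity of $\bar c^\pm$ sandwiches $\bar c^+(|p|)\le\bar c\le\bar c^-(|p|)$. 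But the ``$\le$'' realization does not follow from $\E_\mu[\br]=0$ alone — e.g.\ if $p\cdot\br=0$ a.s.\ and $\br\not\equiv 0$ then $|p+\br|>|p|$ a.s.\ — so the paper's sandwich step is incomplete as written. Your dichotomy is exactly what repairs it: either $\rho=\delta_0$, in which case Corollary~\ref{cor:cor1} already gives a.s.\ convergence at $p$; or $\rho\ne\delta_0$, in which case only the ``$\ge$'' realization is available, and you feed it back into the strict form of the scaling inequality to force $\bar c=0$ and then kill every subsequential limit at $p$. The one place where your route is heavier than needed is the product-law/Fubini step: you can bypass it entirely by following the paper and fixing once and for all a set $\Omega_0$ of full $\P$-measure on which the deterministic $\limsup/\liminf$ in every direction exist (using the uniform-in-$\delta$ Lipschitz dependence of $v^{\delta,q}(0,\omega)$ on $q$ to pass from a countable dense set of directions to all of $\R^d$); then for $\mu$-a.e.\ $\tilde\omega=(\omega,\theta)$ with $\omega\in\Omega_0$, Corollary~\ref{cor:cor1} pins $\bar c^\pm(|p+\br_{\tilde\omega}|)=\bar c$ and rotation invariance propagates this to all deterministic directions of that norm — no disintegration of $\mu$ required. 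The subsolution/comparison verification behind~\eqref{e.planmono} and the derivation of the monotonicity $0\le\bar c(s_1)/s_1\le\bar c(s_2)/s_2$ are both fine.
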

Note that the map $\bar c$ is increasing as soon as it is positive. Moreover, one easily checks that, if $H$ is $1-$homogeneous in $p$ and coercive, then  $\bar c(s)=\bar c s$ for some positive constant $\bar c$. 

\begin{proof} It follows from the assumed bounds and the stationarity, that there exists a set $\Omega_0$ with $\P[\Omega_0]=1$ such that, for any $p\in \R^d$ and $\omega\in \Omega_0$, $\bar c^+(p):=\limsup_{\delta \to 0}-\delta  v^{\delta,p}(0,\omega) $ and $\bar c^-(p):=\liminf_{\delta \to 0}-\delta  v^{\delta,p}(0,\omega)$
exist and are deterministic. The radial symmetry assumption and as well as  \eqref{e.hypHlambda} imply that  $\bar c^\pm(p)= \bar c^\pm( |p|)$  and, in addition, for all $\lambda \in [0,1],$ 
$$
0\leq \bar c^\pm(\lambda s)\leq \lambda \bar c^\pm (s). 
$$
Also note that the maps  $s\to \bar c^\pm(s)$ are nondecreasing. Indeed given  $0<s_1<s_2$, choosing  $s=s_2$ and $\lambda=s_1/s_2$), we find
$$
c^\pm(s_1)/s_1\leq c^\pm(s_2)/s_2\leq c^\pm(s_2)/s_1.
$$
To show that $\bar c^+=\bar c^-$,  let $p\in \R^d$, $\mu$, $\bar c$ and ${\bf r}$ be associated with $p$ as in the previous steps. For $\mu-$a.e. $\tilde \omega=(\omega, \theta,\bar c)$ with $\omega\in \Omega_0$, $\theta'(x):=\theta(x)-{\bf r}_{\tilde \omega}\cdot x$ is a  corrector for $p':=p+{\bf r}_{\tilde \omega}$ and  ergodic constant $\bar c$. It follows that $\lim_{\delta \to 0}(-\delta v^{\delta, p'}(0,\omega))=\bar c,$ 
and, hence, 
$$\bar c=\bar c^+(|p'|)=\bar c^-(|p'|).$$ 
Since $\E[p+{\bf r}]= p$,  there exist $\tilde \omega_1$ and $\tilde \omega_2$ as above such that $|p'_1|\leq |p|\leq |p'_2|$. 
\smallskip

Thus 
$$
\bar c^+(|p|)\leq \bar c^+(|p'_2|)=\bar c= \bar c^-(|p'_1|)\leq \bar c^-(|p|),
$$
and  $\bar c^+(|p|)=\bar c^-(|p|)$. 
\end{proof}

%
%

Another application of the previous results is the convergence in law of the random variable $\delta v^{\delta,p}(0,\cdot)$ when $H$ is convex in the gradient variable. The argument is a variant of \cite{LS3}. Of course, the result is much weaker than the a.s. convergence is established in \cite{LS2}; see also  \cite{ASo1, ASo3}). 
The proof is, however, rather simple.  

\begin{prop}\label{prop:CvInProba} Assume that, $\P-$a.e., $H=H(p,x,\omega)$ is convex in the $p$ variable and that $A=A(x,\omega)$ does not depend on $p$. Then, for any $p\in \R^d$, homogenization holds in probability, that is there exists $\bar H(p)$ such that $\lim_{\delta \to 0} \delta v^\delta(0, \cdot)=-\bar H(p)$ in probability.
\end{prop}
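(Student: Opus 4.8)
The strategy is to run exactly the same construction as in the proof of Theorem~\ref{main1}, but to exploit convexity of $H$ in order to show that the measure $\mu$ (more precisely the limiting homogenized constant $\bar c$ attached to the direction $p$) does not depend on the subsequence $(\delta_n)$, so that the full family $\delta v^{\delta,p}(0,\cdot)$ converges in probability. Recall that Lemma~\ref{lem:lem+} already gives, for every subsequence $\delta_n\to 0$, a constant $\bar c=\bar c(p,(\delta_n))$ with $\delta_n v^{\delta_n,p}(0,\cdot)\to -\bar c$ in probability, and that Corollary~\ref{cor:cor1} produces, for $\mu$-a.e.\ $\tilde\omega=(\omega,\theta,\bar c)$, a genuine corrector $\theta'(x)=\theta(x)-\mathbf r_{\tilde\omega}\cdot x$ for the shifted direction $p'=p+\mathbf r_{\tilde\omega}$ with homogenized constant $\bar c$, and that moreover $\E_\mu[\mathbf r]=0$ by Lemma~\ref{lem:lemma9}. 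Thus it suffices to show that $\bar c$ is uniquely determined by $p$, independently of the subsequence.

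The key point is that, in the convex setting with $A=A(x,\omega)$ independent of $p$, the existence of a corrector $\chi$ for a direction $q$ with sublinear growth pins down the value of $\bar H(q)$ uniquely, and moreover the map $q\mapsto \bar H(q)$ obtained this way is convex. Concretely, I would first establish a one-sided comparison: if $\chi$ is a sublinear subsolution (resp.\ supersolution) of the corrector equation \eqref{eq:correctorIntro} at level $\lambda$ for the direction $q$, then $\lambda \le \bar c^+(q):=\limsup_\delta -\delta v^{\delta,q}(0,\cdot)$ (resp.\ $\lambda\ge \bar c^-(q)$); this is a standard argument (perturbed test function / direct comparison with the approximate corrector $v^{\delta,q}$, using the uniform Lipschitz bound from $(\mathbf H)$ and sublinearity of $\chi$ to absorb boundary terms). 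Hence the value $\bar c$ produced by the construction for $p'$ equals both $\bar c^+(p')$ and $\bar c^-(p')$, and these deterministic $\limsup$/$\liminf$ quantities are, by the convexity of $H$ and the $p$-independence of $A$, convex functions of the direction (the approximate correctors $v^{\delta,q}$ themselves inherit convexity in $q$ via the inf-convolution/variational structure, or one argues directly that $q\mapsto -\delta v^{\delta,q}(0,\omega)$ is convex and passes to the limit).

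Now fix $p$ and two subsequences giving constants $\bar c_1,\bar c_2$ with corresponding random shifts $\mathbf r^{(1)},\mathbf r^{(2)}$, both mean zero. As in the proof of Corollary~\ref{cor:cor2}, the support of the shift distribution together with $\E_\mu[\mathbf r^{(i)}]=0$ lets me pick realizations so that $p'_1=p+\mathbf r^{(1)}_{\tilde\omega_1}$ and $p'_2=p+\mathbf r^{(2)}_{\tilde\omega_2}$ straddle $p$ in the sense that $p$ is a convex combination, $p=\alpha p'_1+(1-\alpha)p'_2$ for some $\alpha\in[0,1]$, after first reducing (by averaging over the convex set of attainable shifts, exactly the argument already used for Corollary~\ref{cor:cor1} and Corollary~\ref{cor:cor2}) to the case where $\mathbf r^{(i)}$ can be taken along $\pm$ a common line through the origin; since $\bar c^{\pm}$ is convex and $\bar c^+(p'_i)=\bar c^-(p'_i)=\bar c_i$, convexity gives $\bar c^+(p)\le \alpha\bar c_1+(1-\alpha)\bar c_2$, while a symmetric argument with the roles of the two subsequences reversed gives the reverse inequality; combining these forces $\bar c_1=\bar c_2$. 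Therefore $\bar c$ depends only on $p$, we call it $\bar H(p)$, and the whole family $\delta v^{\delta,p}(0,\cdot)\to -\bar H(p)$ in probability, proving the proposition.

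\textbf{Main obstacle.} The delicate step is not the soft ergodic/tightness machinery (identical to Theorem~\ref{main1}) but the rigidity argument: showing that a sublinear corrector determines the homogenized constant \emph{and} that the resulting function of the direction is convex, so that the two subsequential limits can be compared through a point where $p$ is sandwiched. This is exactly where convexity of $H$ and $p$-independence of $A$ are used, and where one must be careful that the comparison principle in $(\mathbf H)$ applies to the (merely Lipschitz, stationary-in-law) corrector and not only to the approximate correctors.
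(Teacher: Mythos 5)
Your approach is genuinely different from the paper's, and I'll compare the two, but as written it contains a concrete error and a step that does not go through.

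\textbf{Comparison of routes.} The paper's proof does not pass through Corollary~\ref{cor:cor1} or shifted directions at all. Instead, it disintegrates $\mu$ as $d\mu(\omega,\theta)=d\mu_\omega(\theta)\,d\P(\omega)$ and sets $\hat\theta(x,\omega):=\int_\Theta\theta(x)\,d\mu_\omega(\theta)$. Convexity of $H$ together with the $p$-independence of $A$ is used via Jensen to show that $\hat\theta$ is a \emph{sub}solution of the corrector equation for $p$ itself at level $\bar c$; the invariance of $\P$ and $\mu$ under $(\tau_z)$, $(\tilde\tau_z)$ and Lemma~\ref{lem:lemma6} give that $\hat\theta$ has stationary, mean-zero increments, hence is a.s.\ sublinear. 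The LS3 comparison then gives $\liminf_{\delta\to0}\delta v^{\delta,p}(0,\omega)\ge -\bar c$, and exchanging subsequences forces $\bar c=\bar c'$. Your proposal instead keeps the random shift $\mathbf r$, uses the corrector for $p'=p+\mathbf r$ to identify $\bar c^\pm(p')$, and then wants to transport the information back to $p$ through convexity of $q\mapsto\bar c^\pm(q)$ and $\E_\mu[\mathbf r]=0$. This can be made to work, but it is a different use of convexity (on the effective level rather than on the $p$-dependence of $H$), and it implicitly needs the fact $\bar c^\pm(q)=\lim_\delta(-\delta v^{\delta,q}(0,\omega))$-type quantities are deterministic and convex in $q$, which must be justified.

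\textbf{Gaps in the proposal as written.}
First, the stated one-sided comparison has the inequalities reversed: a sublinear \emph{sub}solution at level $\lambda$ for direction $q$ gives $\bar c^+(q)\le\lambda$, not $\lambda\le\bar c^+(q)$, and a sublinear \emph{super}solution gives $\bar c^-(q)\ge\lambda$, not $\lambda\ge\bar c^-(q)$. With your orientation of the inequalities one only obtains $\bar c^-(q)\le\lambda\le\bar c^+(q)$, which does \emph{not} pin down $\lambda$; with the correct orientation one gets $\bar c^+(q)\le\lambda\le\bar c^-(q)\le\bar c^+(q)$ and hence uniqueness. Second, the ``sandwich'' $p=\alpha p_1'+(1-\alpha)p_2'$ combined with convexity only ever produces inequalities of the form $\bar c^+(p)\le\alpha\bar c_1+(1-\alpha)\bar c_2$; ``reversing the roles of the two subsequences'' gives another inequality of the same sign, not its reverse, because Jensen for a convex function is one-directional. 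The equality $\bar c^+(p)=\alpha\bar c_1+(1-\alpha)\bar c_2$ does not force $\bar c_1=\bar c_2$ for a single $\alpha\in(0,1)$. The ``reduction to a common line through the origin'' invoked from Corollary~\ref{cor:cor2} is not available here: that corollary uses radial symmetry of the law, which you do not have. The correct way to close your argument is more direct and does not need the mixed sandwich at all: from $\E_\mu[p+\mathbf r]=p$ and convexity of $\bar c^+$, Jensen gives $\bar c^+(p)\le\E_\mu[\bar c^+(p+\mathbf r)]=\bar c_1$; on the other hand $\bar c_1$ is the in-probability limit along a subsequence of the uniformly bounded family $-\delta v^{\delta,p}(0,\cdot)$, so passing to a further a.s.\ convergent subsequence gives $\bar c_1\le\limsup_\delta(-\delta v^{\delta,p}(0,\cdot))=\bar c^+(p)$; hence $\bar c_1=\bar c^+(p)$, and the same for $\bar c_2$, so $\bar c_1=\bar c_2=\bar c^+(p)=:\bar H(p)$, which yields convergence in probability. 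With these corrections your route works, but the paper's averaged-subcorrector argument is shorter and avoids both the appeal to Corollary~\ref{cor:cor1} and the need to establish convexity and determinism of $q\mapsto\bar c^\pm(q)$ separately.
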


\begin{proof} Let $\mu$ be a measure built as in the beginning of the section. It follows that 
there exists a random  family of measures $\mu_\omega$ on $\Theta$ such that, for any continuous map $\phi:\Omega\times \Theta\to\R$, one has 
$$
\int_{\Omega\times \Theta} \phi(\omega,\theta) d\mu(\omega,\theta)=\int_{\Omega}\left[\int_{\Theta} \phi(\omega,\theta)d\mu_\omega(\theta)\right]d\P(\omega).
$$
Set $\hat \theta(x,\omega):=  \int_{\Theta}\theta(x)d\mu_\omega(\theta)$. Since $\P$ and $\mu$ are invariant with respect to $(\tau_z)_{z\in \Rd}$ and $(\tilde \tau_z)_{z\in \Rd}$ respectively, for any bounded measurable map $\phi=\phi(\omega)$ and any $z\in \R^d$, we have 
$$
\begin{array}{rl}
\ds \int_{\Omega} \phi(\omega)(\hat \theta(x+z,\omega) -\hat \theta(z))d\P(\omega)\; 
= & \ds 
\ds \int_{\Omega\times \Theta} \phi(\omega)(\theta(x+z) -\theta(z))d\mu(\omega,\theta)
\\[1.5mm]
= \; \ds 
 \ds \int_{\Omega\times \Theta} \phi(\tau_{-z}\omega)\theta(x)\tilde \tau_{z}\sharp d\mu(\omega,\theta)
= & \ds 
 \ds \int_{\Omega} \phi(\tau_{-z}\omega)\hat \theta(x,\omega) d\P(\omega)
=\ds \int_{\Omega} \phi(\omega)\hat \theta(x,\tau_z\omega) d\P(\omega).
\end{array}
$$
This shows that $\hat \theta$ has stationary increments. Moreover, in view of  Lemma \ref{lem:lemma6}, $\hat \theta$ has mean zero, and, hence,  $D\hat \theta$ is stationary with average $0$. In particular, $\hat \theta$ is $\P-$a.s. strictly sublinear at infinity. Since, for $\mu-$a.e. $(\omega,\theta)$, $\theta$ is a solution to \eqref{eq:corrector} and $H$ is convex in the gradient variable, $\hat \theta$ is a subsolution to \eqref{eq:corrector} and, thus 
a subcorrector. Following \cite{LS3}, this implies that 
$$
\liminf_{\delta\to 0} \delta v^{\delta,p}(0,\omega) \geq -\bar c. 
$$
In particular, for any  sequence $(\delta_n')_{n'\in \N}$ which tends to $0$ such that $(\mu_{\delta_n',p})_{n'\in \N}$ and 

$(\delta_n' v^{\delta_n',p}(0))_{n'\in \N}$ converge respectively to a measure $\mu'$ and a constant $-\bar c'$, we have   $\bar c'\leq \bar c$. Exchanging the roles of $(\delta_n)_{n\in \N}$ and $(\delta_n')_{n'\in \N}$ leads to the equality $\bar c=\bar c'$. The conclusion now follows.

\end{proof}

We are now ready to prove our main result. 

\begin{proof}[Proof of Theorem \ref{main1}]  We assume that homogenization holds in probability and  $p\in \R^d$ is  an extreme point of the convex hull of the set $S:=\{q\in \R^d: \bar H(q)\leq \bar H(p)\}$. 
\smallskip

Let $\mu$ be a measure built as in the beginning of the section and ${\bf r}$ be defined by Lemma \ref{lem:lemma8}. 
\small

Then $\bar H(p+{\bf r})=\bar H(p)$ $\mu-$a.s., that is  $p+{\bf r}$ belongs to $S$ $\mu-$a.s. Indeed Lemma \ref{lem:lemma4} gives  $\bar c=\bar H(p)$ and 
$$
-{\rm tr}(A(D\theta+p, x,\omega)D^2\theta) +H(D\theta+p, x, \omega)=\bar c \ \  {\rm in } \ \ \R^d,
$$
while,in view of Lemma \ref{lem:lemma8},  for all $x\in \R^d$, 
$$
\lim_{t\to+\infty} \frac{\theta(tx)}{t}={\bf r}\cdot x. 
$$
Thus $\tilde \theta(x):= \theta(x)-{\bf r}\cdot x$ is a  corrector for $p+ {\bf r}$, that is  it  satisfies 
$$
-{\rm tr}(A(D\tilde \theta+p+{\bf r},x,\omega)D^2\tilde \theta) +H(D\tilde \theta+p+{\bf r}, x, \omega)=\bar c \quad {\rm in } \ \ \R^d \ \ \text{and} \ \  \lim_{|x|\to+\infty} \tilde \theta(x)/|x|=0.
$$
It follows that  $\bar H(p+{\bf r})=\bar H(p)$ $\mu-$a.s.. 
\smallskip

Next we recall (Lemma \ref{lem:lemma9}) that  $\E_\mu[p+{\bf r}]=p$. Since $p+{\bf r} \in S$ $\mu-$a.s. and $p$ is an extreme point of the convex hull of $S$, the equality $\E_\mu[p+{\bf r}]=p$ implies that ${\bf r}=0$ $\mu-$a.s..  Therefore 
$\ds
\lim_{|x|\to+\infty} \theta(x)/|x|=0
$
$\mu-$a.s., 
which, together with the fact that $\theta$ solves the corrector equation for $p$, implies that $\theta$ is a  corrector for $p$ itself. 
\end{proof}


\bibliographystyle{plain}

\newcommand{\noop}[1]{} \def\cprime{$'$} \def\cprime{$'$}

\end{document}